\numberwithin{equation}{section}
\theoremstyle{plain}
\newtheorem{theorem}{Theorem}
\newtheorem{lemma}[theorem]{Lemma}
\newtheorem{externaltheorem}[theorem]{Theorem}
\theoremstyle{definition}
\newtheorem*{definition}{Definition}
\newtheorem*{notation}{Notation}
\theoremstyle{remark}
\newtheorem{remark}[theorem]{Remark}
\renewcommand{\L}{{\mathcal{L}}}
\renewcommand{\mod}{\,\mbox{\scriptsize\rm mod}\,}
\renewcommand{\sigma}{z}
\newcommand{\constant}{\mbox{\it\~c}}
\newcommand{\measure}[1]{\lambda\!\left( #1 \right)}
\newcommand{\ceil}[1]{\lceil #1 \rceil }
\newcommand{\digits}[2]{\mathcal{L}(#2,#1)} 
\renewcommand{\>}{\rangle}
\newcommand {\base}[2]{\langle{#1};{#2}\rangle}
\newcommand{\card}{\mbox{\raisebox{.20em}{{$\scriptscriptstyle \#$}}}}
\newcommand{\Tmax}{T_{\mbox{max}}}
\newcommand{\reps}{{\cal S}}
\newcommand{\lcm}{\text{lcm}}
\newcommand{\occ}{\text{occ}}
\newcommand{\B}{B}
\begin{document}

\title{On Simply Normal Numbers to Different Bases}
\author{\begin{tabular}{ccc}
Ver\'{o}nica Becher& Yann Bugeaud &Theodore A. Slaman
\\
{\small Universidad de Buenos Aires}&
{\small Universit\'e de Strasbourg}&
{\small University of California Berkeley}
\\
{\small vbecher@dc.uba.ar}&
{\small bugeaud@math.unistra.fr}&
{\small slaman@math.berkeley.edu}
\end{tabular}
}
\date{August 29, 2013}
\maketitle

\begin{abstract}
Let $s$ be an integer greater than or equal to $2$. A real number is simply normal
to base~$s$ if in its base-$s$ expansion every digit $0, 1, \ldots , s-1$ 
occurs with the same frequency~$1/s$.
Let ${\reps}$ be the set of positive integers that are not perfect powers, 
hence $\reps$ is the set $\{2,3, 5,6,7,10,11,\ldots\} $.
Let $M$ be a function from $\reps$ to sets of positive integers such that, 
for each $s$ in $\reps$, if $m$ is in $M(s)$ 
then each divisor of $m$ is in $M(s)$ and if $M(s)$ is infinite then 
it is equal to the set of all positive integers.  These conditions on $M$ 
are necessary for there to be a
real number which is simply normal to exactly the bases $s^m$ such that $s$ 
is in $\reps$ and $m$ is in $M(s)$.
We show these conditions are also sufficient and further establish that the set of real numbers 
that satisfy them has full Hausdorff dimension.  This extends a result
of W.~M.~Schmidt (1961/1962) on normal numbers to different bases.
\end{abstract}

\tableofcontents

\section{Introduction}
\addtocounter{footnote}{1}
In 1909 \'Emile Borel~\cite{Bor09} introduced the notions of simple normality and of normality to an integer base.
Let $s$ be an integer greater than or equal to $2$. 
A real number $x$ whose  expansion in base $s$ is given by
\[
x = \lfloor x \rfloor + \sum_{j \geq 1} a_j\,s^{-j},
\]
where $a_j \in \{0, 1, \ldots , s-1\}$ for $j \ge 1$,  is said to be {\em simply normal} to base $s$ if every digit $d\in\{0, 1, \ldots , s-1\}$ occurs in the sequence $(a_j)_{ j \ge 1}$ with the same frequency $1/s$.  That is, for every such digit $d$,
\[
\lim_{n\to\infty}\frac{\card\{ j:  1\leq j\leq n \text{ and } a_j=d\}}{n}= \frac{1}{s}.
\]
A number $x$ is said to be {\em normal} to base $s$ if it is simply normal to base $s^k$ for every integer~$k\geq 1$.\footnote{To be accurate, the latter definition is not  
the one originally given by Borel, but equivalent to it.}


 Borel established that almost all real numbers, 
with respect to the Lebesgue measure,
are normal to every integer base greater than or equal to $2$.  
Several equivalent definitions of normality
are given in the monograph~\cite{Bug193}. 

Are there numbers that are {\em simply normal} to  arbitrarily different  bases?
This question was implicit in the literature and hitherto only partially answered.
Recall that two positive integers  are multiplicatively 
dependent when one is a rational power of the other.
It is already known that for any given set 
of bases closed under multiplicative dependence there are uncountably 
many numbers that are simply normal to each base in the given set and 
not simply normal to any base in its complement. 
The historical trace of this result goes back first to a 
theorem of  Maxfield~\cite{Max53} showing that
normality to one base implies normality to another 
when the two are multiplicatively dependent.
Then Schmidt~\cite{Sch61}, improving a result by Cassels~\cite{Cas59}
and his previous result~\cite{Sch60},
showed that for any set of bases closed under multiplicative dependence, 
the set of real  numbers that are normal to every base in the given set  
but not normal to any base in its complement 
is uncountable.  Lastly, Becher and Slaman~\cite{BecSla13}  
established the analogous theorem denying simple normality instead of normality.
These results, however, do not settle the behavior of simple 
normality to bases within multiplicative-dependence equivalence classes.

Already in 1957, Long \cite{Lon57} proved that if a real number is simply normal to base $s^m$ for
infinitely many exponents~$m$, then it is normal to base $s$, 
hence simply normal to base $s^m$ for
every positive integer $m$.  A straightforward analysis, see \cite[Lemma~4.3]{Bug12a}, shows that
for any base $s$ and exponent $m$, 
simple normality to base $s^m$ implies simple normality to base
$s$.  Hertling \cite{Her02} investigated the converse and concluded 
that simple normality to base $r$
implies simple normality to base $s$ if and only if $r$ is a power of $s$.  This leaves open the
question of whether for any given base $s$ there are numbers 
that are simply normal to bases $s^m$
for just finitely many positive integers $m$.  In the following theorem we settle
the characterization of simple normality to different bases and considerably extend Schmidt's
result \cite{Sch61}.

\begin{theorem}\label{1}
  Let ${\reps}$ be the set of positive integers that are not perfect powers, hence $\reps$ is the
  set $\{2,3, 5,6,7,10,11,\ldots\} $.  Let $M$ be a function from $\reps$ to sets of positive
  integers such that, for each $s$ in $\reps$, if $m$ is in $M(s)$ then each divisor of $m$ is in
  $M(s)$ and if $M(s)$ is infinite then it is equal to the set of all positive integers.  There is a
  real number~$x$ such that, for every integer $s$ in $\reps$ and every positive integer $m$, $x$ is
  simply normal to base $s^m$ if and only if $m$ is in~$M(s)$.  Moreover, the set of real
  numbers~$x$ that satisfy this condition has full Hausdorff dimension.
\end{theorem}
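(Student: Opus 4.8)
The plan is to realize the set of witnesses as a Cantor scheme — a nested sequence of finite unions of disjoint closed intervals — built to meet, one at a time, a fixed countable list of requirements read off digit expansions, while keeping the branching generous enough that a slack parameter $\delta\to 0$ yields full Hausdorff dimension. First one fixes the requirements. Enumerate $\reps=\{s_1,s_2,\dots\}$. For each $n$ with $M(s_n)=\N$ impose the positive requirements ``$x$ is simply normal to base $s_n^{k}$'' for all $k\ge 1$. For each $n$ with $M(s_n)$ finite — hence divisor-closed, hence equal to the union of the divisor-sets of its finitely many maximal elements — impose the positive requirements ``$x$ is simply normal to base $s_n^{d}$'' for $d$ ranging over those maximal elements (which, by \cite[Lemma~4.3]{Bug12a} applied with base $s_n^{d'}$ and exponent $d/d'$, already forces simple normality to every $s_n^{d'}$ with $d'\mid d$), together with the negative requirements ``$x$ is not simply normal to base $s_n^{q}$'' for every $q\notin M(s_n)$. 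Since every integer $\ge 2$ equals $s^{m}$ for a unique $s\in\reps$, meeting all these requirements is exactly the assertion of the theorem. Read in base $s_n$, simple normality to $s_n^{k}$ says that, cutting the base-$s_n$ digits into consecutive blocks of length $k$, each length-$k$ word occurs with frequency $s_n^{-k}$; the negative requirement for $q$ is secured as soon as, for infinitely many prefix lengths, the word $0^q$ occurs among the length-$q$ blocks with frequency at most $(1-\eta)s_n^{-q}$ for some fixed $\eta>0$.

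Next, run the construction with a parameter $\delta\in(0,1)$ to produce a Cantor set $E_\delta$ of Hausdorff dimension at least $1-\delta$, all of whose points meet every requirement; then $\bigcup_{j\ge 1}E_{1/j}$ has Hausdorff dimension $1$ and consists of witnesses. At stage $t$ one holds a finite family $\mathcal{C}_t$ of disjoint intervals of common length, records the current discrepancy of the block statistics of every already-activated requirement over the common prefix the intervals determine, and acts for the $t$-th requirement of a fair enumeration in which each positive requirement appears at least once and each negative requirement infinitely often; fix a summable tolerance $\epsilon_t\downarrow 0$ and a non-normality margin $\eta=\eta(\delta)>0$. To act for a positive requirement in base $s_n$, subdivide every interval of $\mathcal{C}_t$ in base $s_n$ over many new digits and keep the sub-intervals along which all tracked length-$k$ base-$s_n^{k}$ block frequencies over the new digits are within $\epsilon_t s_n^{-k}$ of uniform; a counting estimate keeps a $(1-\delta)$-fraction of sub-intervals, and the discrepancies of all other tracked statistics move by only $O(\epsilon_t)$, using that base-$s_n^{k}$ uniformity at one scale gives it at every divisor scale and — here the classical analysis of Cassels \cite{Cas59} and Schmidt \cite{Sch61} is imported — that sufficiently long base-$s_n$ blocks are nearly equidistributed in every multiplicatively independent base. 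To act for a negative requirement for $q$, append a long base-$s_n$ block supplied by the combinatorial lemma below — one that is $\epsilon_t$-uniform in blocks of length $d$ for each $d\in M(s_n)$ and $\epsilon_t$-equidistributed at all orders below a prescribed bound, save for the depressed $0^q$ at scale $q$ — of length a large multiple of the current prefix, so the running $0^q$-frequency drops below $(1-\eta)s_n^{-q}$; one keeps a $(1-\delta)$-fraction of the possible such blocks, and by Cassels–Schmidt this choice also keeps every tracked multiplicatively independent base near-uniform. Because every appended block maintains all tracked positive statistics to within $\epsilon_t$, an activated positive requirement keeps discrepancy $\le\epsilon_t$ at every later stage, so its limiting frequency equals $s^{-k}$; the negative requirements hold through the $\liminf$ witnessed at their infinitely many stages; and choosing $\epsilon_t$, $\eta(\delta)$ and the kept fractions suitably in terms of $\delta$ makes every stage contribute at least $1-\delta$ to the dimension count, whence $\dim_H E_\delta\ge 1-\delta$.

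The genuinely new ingredient is the following combinatorial lemma. For every $s\ge 2$, every finite divisor-closed $D\subseteq\N$, every $q\in\N\setminus D$, and all $\delta,\eta>0$ and $R$, for all large $\ell$ there are at least $(1-\delta)s^{\ell}$ words $w\in\{0,\dots,s-1\}^{\ell}$ such that, cutting $w$ into consecutive blocks of length $q$, the word $0^q$ has frequency at most $(1-\eta)s^{-q}$, while, cutting $w$ into consecutive blocks of length $r$ for any $r\le R$, every length-$r$ word other than $0^q$ (when $r=q$) has frequency within $\delta s^{-r}$ of $s^{-r}$; in particular $w$ is $\delta$-uniform in blocks of length $d$ for each $d\in D$. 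The plan is to take $w$ a long repetition of a cyclic word $v$ of length $P$, a large multiple of $\lcm(D\cup\{q\})\cdot s^{\max(D\cup\{q\})}$, and to build $v$ — by a transportation/flow argument in de Bruijn-type graphs, starting from an exactly scale-balanced word and performing local surgery to depress $0^q$ at scale $q$ — so that for every scale $r\le R$ its consecutive length-$r$ sub-blocks are exactly uniform except for the controlled depression of $0^q$ at scale $q$; the divisor-closed and finiteness hypotheses on $D$ are exactly what make ``balanced at the scales in $D$'' and ``depressed at scale $q$'' jointly consistent, since balance at a scale forces balance at every divisor scale and $q\notin D$ means the scale-$q$ statistics are not already pinned down, and the boundary effects between copies of $v$ contribute only $O(P^{-1})$; the count of valid $w$ exceeds $(1-\delta)s^{\ell}$ because the $\eta$-depression costs only $O(\eta^2 s^{-q}/q)$ entropy per symbol, absorbed by the choice $\eta=\eta(\delta)$.

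The main obstacle is this lemma together with its coupling to the Cassels–Schmidt machinery: producing, uniformly in all the parameters, single-base digit blocks that carry out the required depression while simultaneously maintaining simple normality in each of the finitely many multiplicatively independent bases tracked at that stage is the delicate point; the global Cantor-scheme bookkeeping and the dimension estimate are then routine.
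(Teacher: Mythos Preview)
Your global architecture---Cantor scheme, fair enumeration of requirements, Cassels--Schmidt input for multiplicatively independent bases, dimension via generous branching---matches the paper's. The gap is exactly where you flag it: the combinatorial lemma.

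As stated, the lemma is false. Take $s=2$, $D=\{1\}$, $q=2$, $R\ge 2$: writing $a,b,c,d$ for the scale-$2$ frequencies of $00,01,10,11$ in $w$, the scale-$1$ frequency of $0$ is $a+(b+c)/2$, so scale-$1$ balance to within $\delta$ forces $|a-d|\lesssim\delta$. Depressing $a$ to $(1-\eta)/4$ then drags $d$ below $1/4-\eta/4+O(\delta)$, violating ``every scale-$q$ word other than $0^q$ is within $\delta s^{-q}$ of uniform'' as soon as $\delta\ll\eta$. You cannot repair this by shrinking $\eta$: your construction needs the positive tolerances $\epsilon_t\to 0$ while the non-normality margin $\eta$ stays bounded away from $0$, and the two are coupled through exactly this constraint. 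Separately, the count $(1-\delta)s^\ell$ is incompatible with any fixed depression. Words whose scale-$q$ frequency of $0^q$ lies below $(1-\eta)s^{-q}$ form a large-deviation event of probability $e^{-c\eta^2\ell}$ under the uniform measure on $\{0,\dots,s-1\}^\ell$; your own entropy remark computes this. Hence at most $s^{\ell(1-c'\eta^2)}$ words qualify, not $(1-\delta)s^\ell$, and sending $\eta\to 0$ with $\delta$ erases the $\liminf$ witness when you pass to $\bigcup_j E_{1/j}$.

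The paper avoids both problems by a different device. It passes to base $s^{\ell_U}$ and deletes one or two specific digits $z,\tilde z$ that are \emph{exactly} balanced for every $m\in M(s)$ yet jointly unbalanced for $n$; the surviving alphabet $U$ is then exactly balanced for $M(s)$, so the positive-requirement discrepancies go to $0$ by the law of large numbers alone, decoupled from the fixed scale-$n$ bias carried by $U$. That such $z,\tilde z$ exist is the real combinatorics: it goes through a residue-equivalence argument (Lemma~\ref{3}) resting on Haiman's generating-function identity (Theorem~\ref{2}), and your de~Bruijn flow sketch does not supply a substitute. Moreover, deleting only one or two digits with the parity constraints ($z$ even, $\tilde z$ odd) is precisely what makes the factorization $\bigl|\sum_{d\in U}e(dx)\bigr|\le\tfrac{1}{2}(\card U)\,|1+e(x)|$ go through, and that factorization is the hinge of the Schmidt-type exponential-sum bound (Lemma~\ref{10}); an unstructured set of depressed blocks would not factor this way. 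The dimension then comes from $\log(\card U)/\log(s^{\ell_U})\to 1$ as $\ell_U\to\infty$, with $\ell_U$ large independently of the depression---exactly the decoupling your scheme lacks.
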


\noindent Observe that when $M(s)$ is empty the real number $x$ is not simply normal to base~$s$.

The proof of  Theorem~\ref{1} uses both combinatorial and analytic tools within a global construction.
First, consider the restricted problem, for each base $s\in\reps$,  
of ensuring simple normality of the real number $x$ to
each of the finitely many numbers $s^m$ for $m\in M(s)$, ensuring simple normality 
for each of finitely many numbers $r$ which
are multiplicatively independent to $s$, 
and ensuring a failure of simple normality for~$s^n$, where
$n$ does not divide any element of~$M(s)$.  
We construct an appropriate Cantor set such that almost
every element with respect to its uniform measure is simply normal 
to each base~$s^m$, for $m\in M(s)$, and is
not simply normal to base~$s^n$.  Then, we use Fourier analysis to prove that almost all elements in this
set are simply normal to every base which is multiplicatively independent to $s$.  The latter technique
was used first by Cassels~\cite{Cas59} to prove that almost all elements of the middle-third Cantor
set (with respect to the Cantor measure) are normal to every base which is not a power of $3$.  It
was independently used by Schmidt~\cite{Sch60} to address every pair of multiplicatively
independent integers, and then extended by Schmidt~\cite{Sch61} and Pollington~\cite{Pol81}.

The main novelty in the proof of Theorem~\ref{1} is the determination of this appropriate Cantor set.  
When its elements are
viewed in base~$s^m$, for $m\in M(s)$, each digit should occur with expected frequency~$1/s^m$, 
and when
viewed in base~$s^n$ there should be a bias for some digits over others.  As in
Bugeaud's~\cite[Theorem~6.1]{Bug12a} proof of Hertling's theorem, 
we work with base~$s^\ell$, where $\ell$ 
is a large common multiple of $n$ and the elements of $M(s)$.  
Among the numbers less than~$s^\ell$,
we find one or two, depending on the parity of $s$, which are balanced when written in any of the
bases $s^m$ 
(that is, all the digits in base $s^m$ appear with equal frequency), 
and which are unbalanced when written in base $s^n$.  We obtain the appropriate Cantor set by working in base~$s^\ell$ and omitting these one or two digits.  It takes a rather interesting combinatorial argument
in modular arithmetic to show that such numbers less than~$s^\ell$ exist.

Given this solution to the restricted problem, we construct a nested sequence 
of intervals by recursion with a unique real number $x$ in their intersection. 
A step in the recursion  involves staying in one of the above Cantor
sets long enough so that  
a large initial segment of the  expansion of $x$ to base~$s^n$  has 
that Cantor set's bias while also ensuring 
that the frequency of each digit in the expansion of $x$ to any of the other bases~$r$
being considered continues its convergence to $1/r$, thus giving simple normality to base~$r$.  
Every base to which $x$ is
required to be simply normal is under consideration from some point 
on in the construction and 
every base to which $x$ is required not to be simply normal is acted upon infinitely often.
In case the function $M$ is computable (which means that for each $s \in {\cal S}$, $M(s)$ can be constructed by finitary means),  then so is~$x$.

Regarding metric results, Pollington~\cite{Pol81} established that, for any given set of bases
closed under multiplicative dependence, the set of real numbers 
that are normal to every base in the
given set but not normal to any base in its complement has full Hausdorff dimension.  
More recently,
it is proved in~\cite[Theorem 6.1]{Bug12a} that, for every integer $s$ greater than or equal to $2$
and every coprime integers $m$ and $n$ with $n$ greater than or equal to $2$, the set of real
numbers which are simply normal to base $s^m$ but not 
simply normal to base $s^n$ has full Hausdorff
dimension.  In fact, the proof applies in the more general case in which $n$ does not divide $m$.
The last statement of Theorem~\ref{1}
ensuring full Hausdorff dimension
considerably extends both results.

\begin{notation}
We  denote by $\reps$  the set of positive integers that are not perfect powers,
so $\reps=\{2,3,5,6,7,10,11,\ldots\} $.
A base is an integer greater than or equal to $2$. 
For a base $s$, let $\B_s =\{0,1,\ldots, s-1\}$ denote the set of digits used to
represent real numbers in base $s$.
For a finite set $V$ of non-negative integers, we denote by  $\digits{V}{\ell}$ the  sequences
$(v_0,\dots,v_{\ell-1})$ of $\ell$ many  elements of $V$.  
We refer to such sequences as blocks
and  denote the length of a  block $w$ by $|w|$.  
For $w\in\digits{V}{\ell}$, we denote by $(w;m)$ the  sequence of blocks of length $m$ 
  whose concatenation is the largest prefix of $w$ whose length is a
  multiple of $m$.
We use repeatedly the observation that, for a base~$s$ and positive integers
$\ell$ and $n$ such that $\ell$ is a multiple of~$n$,  a block of length~$\ell$
on $\B_s$ can be seen as a block of length $\ell/n$ on $\B_{s^n}$. 
Furthermore, we sometimes identify the block $b_0 \ldots b_{\ell - 1}$ on $\B_s$
with the integer $b_0 s^{\ell - 1} + \ldots + b_{\ell - 2} s + b_{\ell - 1}$. 
We use the convention that a set is finite if it is empty or it has finitely many elements. 
\end{notation}

\section{Lemmas}
\label{sec:Lemmas}

We start with a collection of lemmas which deal with one single base $s$ and its powers.
We may think of $s$ as an element of $\reps$ but the lemmas  apply to any integer base.

\subsection{Residue equivalence}
\label{sec:residues}

\begin{definition}
  Let $X$, $Y$ be sets of non-negative integers and  let $M$ be a set of positive  integers.  
  Then $X$ and $Y$  are \emph{residue equivalent} 
   for  $M$ if and only if, for every $m$ in $M$ and every integer $r$ with $0\leq r<m$, the sets
  $\{x:x\in X\text{ and } x\equiv r\text{ mod } {m}\}$ and 
  $\{y:y\in Y\text{ and } y\equiv r\text{ mod } {m}\}$ have
  the same cardinality.  When $M=\{m\}$ is a singleton, we  say that $X$ and $Y$ are residue
  equivalent for $m$.
\end{definition}

Instead of directly considering $M$ as a set of positive integers, 
we first consider $M$ as a collection of residue
classes modulo $n$, with multiplicity.

\begin{definition}
  A multiset $M$ of residues mod $n$ is \emph{fair} if there is a positive integer $k$ such that
  $M$ is the multiset in which each integer between $1$ and $n-1$ appears with multiplicity $k$.
\end{definition}

Observe that in case $n$ is $1$, the only fair multiset $M$ of residues mod $1$ is  the empty set.
For a fair multiset $M$, we consider the collection of sums of elements of $M$.

\begin{definition}
Let $n, \sigma, v, k$ be positive integers.  We denote by  $p(n, \sigma, v, k)$
the number of ways that $\sigma$ can be written as a sum of $v$ elements
from the multiset $\{1, \ldots ,1,  \ldots, n-1, \ldots, n-1\}$ in which every 
integer between $1$ and $n-1$ is repeated exactly $k$ times.
\end{definition}

For example, a rapid check shows that $p(3,6,4,2) = 1$ and $p(2, 1, 1, k) =k$ for $k \ge 1$. 
Let $\phi$ denote Euler's totient function: $\phi(n)$ counts the number of positive integers  
less than or equal to $n$ that are relatively prime to $n$.

The following combinatorial theorem, kindly communicated to us
by Mark Haiman, is the key tool for the proof
of Lemma~\ref{3} below.

\begin{externaltheorem}[Haiman~\cite{Hai13}]\label{2}
For any $n$ and $k$ positive integers, we have 
\[
\sum_{\substack{\text{$\sigma$ : $n$ divides $\sigma$}\\ \text{$v$ : $v$ even}}} 
p(n, \sigma, v, k)\quad -
\sum_{\substack{\text{$\sigma$ : $n$ divides $\sigma$}\\ \text{$v$ : $v$ odd}}}
p(n, \sigma, v, k) = n^{k-1}\phi(n).
\]
\end{externaltheorem}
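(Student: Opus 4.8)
The plan is to package the integers $p(n,\sigma,v,k)$ as the coefficients of a two–variable generating function, kill the dependence on the parity of $v$ by setting one variable to $-1$, and then isolate the terms with $n\mid\sigma$ by a roots-of-unity filter.

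Concretely, set
\[
F(x,t)=\prod_{j=1}^{n-1}\bigl(1+tx^{j}\bigr)^{k}.
\]
Thinking of the $k$ copies of each value $j\in\{1,\dots,n-1\}$ as labelled, a monomial in the expansion of $F$ is obtained by choosing, for each copy, either the factor $1$ or the factor $tx^{j}$; hence the coefficient of $t^{v}x^{\sigma}$ in $F(x,t)$ is exactly $p(n,\sigma,v,k)$, with the natural convention $p(n,0,0,k)=1$ for the empty choice (the term that the double sum in the statement must include). Substituting $t=-1$ produces the polynomial
\[
G(x):=F(x,-1)=\prod_{j=1}^{n-1}\bigl(1-x^{j}\bigr)^{k},
\]
whose coefficient of $x^{\sigma}$ equals $\sum_{v}(-1)^{v}p(n,\sigma,v,k)$, i.e.\ the inner alternating sum in the theorem. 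Summing over the $\sigma$ divisible by $n$ and using that, for any polynomial, $\sum_{n\mid\sigma}[x^{\sigma}]G(x)=\frac1n\sum_{\zeta^{n}=1}G(\zeta)$, the left-hand side of the statement equals $\frac1n\sum_{\zeta^{n}=1}G(\zeta)$.

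It then remains to evaluate $G$ at each $n$-th root of unity. If $\zeta$ has order $d$ with $1\le d<n$ (in particular $\zeta=1$), then $1\le d\le n-1$, so the factor $1-\zeta^{d}$ of $G(\zeta)$ vanishes and $G(\zeta)=0$. If $\zeta$ is a primitive $n$-th root of unity, then dividing $x^{n}-1=\prod_{j=0}^{n-1}(x-\zeta^{j})$ by $x-1$ gives $\prod_{j=1}^{n-1}(x-\zeta^{j})=1+x+\cdots+x^{n-1}$, and evaluating at $x=1$ yields $\prod_{j=1}^{n-1}(1-\zeta^{j})=n$, hence $G(\zeta)=n^{k}$. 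Since there are $\phi(n)$ primitive $n$-th roots of unity,
\[
\frac1n\sum_{\zeta^{n}=1}G(\zeta)=\frac1n\cdot\phi(n)\cdot n^{k}=n^{k-1}\phi(n),
\]
which is the desired identity; the degenerate case $n=1$ is the trivial instance of the same computation, the product over $1\le j\le 0$ being empty.

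The argument is essentially forced once the generating function is chosen, so I do not anticipate a genuine obstacle. The two points that need care are purely bookkeeping: treating the $k$ copies of each residue as distinguishable so that the exponent $k$ in $F$ is correct (and keeping the empty choice $(\sigma,v)=(0,0)$ inside the range of summation), and the observation that every non-primitive $n$-th root of unity annihilates $G$, which is exactly what collapses the root-of-unity average to the clean value $n^{k-1}\phi(n)$.
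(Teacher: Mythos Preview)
Your proof is correct and follows essentially the same route as the paper: the same two-variable generating function (with the variables' roles swapped), the substitution $t=-1$ to encode parity, the roots-of-unity filter to pick out $n\mid\sigma$, and the same evaluation at primitive versus non-primitive roots. Your explicit remark about the empty term $(\sigma,v)=(0,0)$ is a nice bit of care that the paper glosses over.
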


\begin{proof}
The generating function for $p(n, \sigma,v, k)$ is given by
\[
\sum_{s \ge 1, v \ge 1} p(n, \sigma,v, k) x^v q^s=\prod_{j=1}^{n-1}(1+xq^j)^k.
\] 
To calculate
\[d(n,k) = \sum_{\substack{\text{$\sigma$ : $n$ divides $\sigma$}\\ \text{$v$ : $v$ even}}} 
p(n, \sigma,v, k)\quad -
\sum_{\substack{\text{$\sigma$ : $n$ divides $\sigma$}\\ \text{$v$ : $v$ odd}}}
p(n, \sigma,v, k), 
\]
 set $x = -1$ and choose $q$ to be an $n$-th root of unity.  Then,
 averaging over all the $n$-th roots of unity, we obtain 
\[
 d(n,k) = \frac{1}{n} \,  \sum_{w^n = 1} \prod_{j=1}^{n-1} (1 - w^j)^k.
\]
If $w$ is not a primitive $n$-th root of unity, then $w^j=1$ for
some positive integer $j$ less than $n$ and the 
above product vanishes.  
If $w$ is a primitive $n$-th root of unity, then the above product is 
equal to the $k$-th power of
\[
 \prod_{j=1}^{n-1} (1 - e^{2 \pi i j/n}).
\]
Setting $z=1$ in the equality 
\[
\prod_{j=1}^{n-1} (z - e^{2 \pi i j/n}) = (z^n - 1)/(z - 1) = 1 + z + z^2 + \ldots + z^{n-1}, 
\]
we get that
\[
\prod_{j=1}^{n-1} (1 - e^{2 \pi i j/n}) = n.
\]
It then follows that $d(n,k) = n^{k-1} \phi(n)$.
\end{proof}

The next lemma extends the following easy observation.
Let $m_1, m_2$ and $n$ be positive integers such that $n$ does not
divide $m_1$ nor $m_2$. Then, the sets $\{0, m_1 + m_2\}$
and $\{m_1, m_2\}$ are residue equivalent for $m_1$
and for $m_2$, but not for $n$. 

\begin{lemma}\label{3}
Let  $M$ be a non-empty finite set of positive integers and  $n$ be a positive integer that does
not divide any element of $M$. Then, there are sets $X$ and $Y$ of non-negative integers 
which are residue equivalent for $M$ and not residue equivalent for $n$.
\end{lemma}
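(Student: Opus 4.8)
The plan is to reduce the general case of $M$ to the ``fair multiset'' situation covered by Haiman's theorem, and then convert a nonzero signed count of representations into an honest pair of residue-equivalent sets. Let $n$ be the given positive integer dividing no element of $M$. Since the elements of $M$ are nonzero modulo $n$, each $m \in M$ reduces to some residue in $\{1,\dots,n-1\}$. I would first arrange, possibly after replacing $M$ by a union of suitable translates of $M$ (adding to each element a common multiple of all the $m$'s so as not to disturb residue equivalence for $M$, while cycling through residue classes mod $n$), that the multiset of residues mod $n$ obtained from $M$ is \emph{fair} in the sense of the definition above: each residue $1,\dots,n-1$ appears exactly $k$ times for some $k\ge 1$. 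The point of this normalization is that residue equivalence for $M$ is preserved under uniform translation by multiples of the elements of $M$, whereas the residue class mod $n$ of a sum changes, so we have enough freedom to balance the residues mod $n$.

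Next I would build $X$ and $Y$ as sets of sums. Write $M=\{m_1,\dots,m_t\}$ and for a subset $S\subseteq\{1,\dots,t\}$ let $\sigma(S)=\sum_{i\in S} m_i$. Split the subsets of $M$ by the parity of $|S|$: put into $X$ all sums $\sigma(S)$ with $|S|$ even and into $Y$ all sums $\sigma(S)$ with $|S|$ odd (as multisets, then one argues the cardinalities are what matter). For any single $m\in M$, pairing each $S$ with $S\,\triangle\,\{m\}$ is a bijection between even and odd subsets that shifts $\sigma$ by $\pm m$; hence $X$ and $Y$ are residue equivalent for $m$, and therefore for all of $M$. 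On the other hand, the number of even-size subsets whose sum lies in a fixed residue class $r$ mod $n$, minus the number of odd-size subsets with the same property, is exactly the coefficient extracted by setting $x=-1$ and averaging a power of $q$ against $n$-th roots of unity in the generating function $\prod_{i=1}^{t}(1+x\,q^{m_i})$ — which, after the fairness normalization, is precisely $\prod_{j=1}^{n-1}(1+x\,q^{j})^{k}$, the generating function appearing in Theorem~\ref{2}. Taking $r=0$, Haiman's theorem gives that this signed count equals $n^{k-1}\phi(n)\neq 0$, so the number of even subsets summing to $0$ mod $n$ differs from the number of odd ones; hence $X$ and $Y$ cannot be residue equivalent for $n$.

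I expect the main obstacle to be the reduction to the fair case: one must choose the translates of $M$ carefully so that (i) the combined multiset of residues mod $n$ really is fair — each of $1,\dots,n-1$ with equal multiplicity — and (ii) translating by multiples of the $m$'s genuinely leaves residue equivalence for $M$ intact, which requires the translation amount for each copy of $M$ to be a common multiple of \emph{all} elements of $M$ (so that within each copy the even/odd pairing still shifts $\sigma$ by exactly $\pm m_i$ mod $m_i$, i.e.\ by $0$). A clean way to do this: take $L$ to be a common multiple of the elements of $M$, and for $a=0,1,\dots,n-1$ form the translated set $M + aL$ viewed inside a big enough range; the residues mod $n$ contributed by $M+aL$ are those of $M$ shifted by $aL \bmod n$. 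If $\gcd(L,n)=1$ this shift is a bijection of $\mathbb{Z}/n\mathbb{Z}$, so summing over $a=0,\dots,n-1$ and discarding the copies that land on $0$ mod $n$ yields each nonzero residue the same number of times; if $\gcd(L,n)>1$ one first replaces $L$ by $L$ times an integer coprime to $n/\gcd(L,n)$, or argues orbit-by-orbit. Once the fair normalization is in hand, the parity/generating-function computation and the appeal to Theorem~\ref{2} are routine, and the secondary point — that residue equivalence for a set is exactly what one gets by summing the residue-equivalences for its members — follows directly from the symmetric-difference bijection described above.
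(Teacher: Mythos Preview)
Your approach is essentially the paper's: reduce to the case where the multiset of residues of $M$ modulo $n$ is fair, take $X$ and $Y$ to be the even- and odd-subset sums of $M$, verify residue equivalence for each $m\in M$ (your involution $S\mapsto S\,\triangle\,\{m\}$ is a clean alternative to the paper's induction on $|M|$), and invoke Haiman's theorem for the failure modulo $n$; the paper likewise converts the resulting multisets to genuine sets at the end. One technical point: your translate construction for the fairness reduction breaks when $\gcd(L,n)>1$---for instance with $M=\{2\}$, $n=4$, $L=2$, the translates $2+aL$ only hit residues $0$ and $2$ modulo $4$, and replacing $L$ by $Lc$ with $c$ coprime to $n/\gcd(L,n)$ does not change $\gcd(L,n)$---whereas the paper simply adjoins to $M$ arbitrary positive integers with the missing residues modulo $n$, which is both easier and avoids this obstruction entirely.
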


\begin{proof}
Without loss of generality, we may assume that the multiset of residues of the elements of  $M$
modulo $n$ is fair.  If necessary, $M$ can be extended to a set with this property and proving the
lemma for this larger set also verifies it for $M$.

Let $E(M)$ be the multiset of non-negative integers   that can be expressed as sums of evenly many
elements of $M$, where the multiplicity of each element is the number of ways that it can be
expressed as such a sum. Here, we adopt the convention that the empty sum is even and has value
$0$.  Likewise, let $O(M)$ be the analogous multiset defined using sums of oddly
many elements of $M$.

Write $M=\{m_1, m_2, \ldots , m_k\}$ and $M_j=\{m_i:i\leq j\}$ for $j=1, \ldots , k$, thus
$M=M_k$. Proceed by induction on $j$ to show that $E(M_j)$ and $O(M_j)$ are residue equivalent for
$M_j$.  Observe that $E (\{m_1\}) =\{0\}$ and $O (\{m_1\}) =\{m_1\}$ are residue equivalent for
$\{m_1\}$.  Let $j\leq k-1$ be such that $E(M_j)$ and $O(M_j)$ are residue equivalent for $M_j$.
Then we have
\begin{align*}
  E(M_{j+1})=&E(M_j)\cup\{m_{j+1}+o:o\in O(M_j)\}\\
  O(M_{j+1})=&O(M_j)\cup\{m_{j+1}+e:e\in E(M_j)\}.
\end{align*}  
Observe that $E(M_j)$ and $O(M_j)$ are residue equivalent for $M_j,$ 
and $\{m_{j+1}+o:o\in
O(M_j)\}$ and $\{m_{j+1}+e:e\in E(M_j)\}$ are also residue equivalent for $M_j.$ 
Consequently, $E(M_{j+1})$ and $O(M_{j+1})$ are residue equivalent for $M_j.$

Observe that
both $ E(M_{j+1})$ and $O(M_{j+1})$ are residue equivalent 
to $E(M_j)\cup O(M_j)$ for $m_{j+1}$,
hence residue equivalent to each other for $m_{j+1}$.  This implies
that $E(M_{j+1})$ and $O(M_{j+1})$ are residue equivalent for $M_{j+1}.$
By an immediate induction, we get that $E(M)$ and $O(M)$ are  
residue equivalent for $M$.

Since the multiset  $M$ of residues modulo $n$ is fair, 
we deduce from Theorem~\ref{2}  
that $E(M)$ and $O(M)$ have different counts for the residue $0$ modulo $n$. 
Hence, the multisets $E(M)$ and $O(M)$ are not
residue equivalent for $n$.
Define $X$ and $Y$ as the sets consisting of the minimal non-negative integers such that 
$X$ is residue equivalent to $E(M)$ for $M\cup\{n\}$ and $Y$ 
is residue equivalent to $O(M)$ for $M\cup\{n\}$.
\end{proof}

\subsection{Block equivalence}
\label{sec:block-equivalence}

\begin{notation}
For a finite, non-empty set $M$ of positive integers, $\lcm(M)$ denotes the least common
multiple of the elements of $M$. We set $\lcm(\emptyset) = 1$. 
\end{notation}

\begin{definition}
  Let $s$ be a base and let $M$ be a set of positive integers.  Let $u$ and $v$ be blocks in
  $\digits{\B_s}{\ell}$, where $\ell$ is a multiple of $\lcm(M)$.  Then, $u$ and $v$ are \emph{block
  equivalent} for $M$ if and only if, for each $m\in M$ 
  and for each block $z$ of length $m$ on $\B_s$, the
  number of occurrences of $z$  in $(u;m)$ is the same as the number of occurrences of $z$ in
  $(v;m)$.  When $M=\{m\}$ is a singleton, we say that $u$ and $v$ are block equivalent for $m$.
\end{definition}

\begin{lemma}\label{4}
Let $s$ be a base, 
$M$ be a finite set of positive integers and  
$n$ be a positive integer that does not divide any element of $M$.
There are blocks $u$ and $v$ of digits in base  $s$ 
which are block equivalent for $M$ and not block equivalent for $n$,
and their length is a multiple of each of the elements in $M$ and $n$.
\end{lemma}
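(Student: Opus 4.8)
The plan is to deduce Lemma~\ref{4} from Lemma~\ref{3} by translating residue equivalence of sets of integers into block equivalence of blocks of digits. The translation rests on a simple encoding: an integer $a$ is represented by the block of length $\ell$ carrying a single nonzero digit in position $a$; then the residue of $a$ modulo $m$ is exactly the position of that digit inside the aligned length-$m$ window containing it, so a congruence condition modulo $m$ turns into a window-pattern condition modulo $m$.

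I would first treat the degenerate case $M=\emptyset$ directly, taking $u=0^{n}$ and $v=1\,0^{\,n-1}$: these are vacuously block equivalent for $\emptyset$, have length $n$, and read in base $s^{n}$ are the distinct integers $0$ and $s^{n-1}$, so they are not block equivalent for $n$. Assume now $M\neq\emptyset$ and set $D=\lcm(M\cup\{n\})$, a common multiple of $n$ and of every element of $M$. Apply Lemma~\ref{3} to obtain sets $X,Y$ of non-negative integers that are residue equivalent for $M$ but not for $n$. Reduce every element of $X$ and of $Y$ modulo $D$ to get multisets $X',Y'\subseteq\{0,\dots,D-1\}$. Since every modulus appearing divides $D$, the reduction preserves all the relevant residue-class counts, so $X'$ and $Y'$ are still residue equivalent for $M$ and still not residue equivalent for $n$; and summing these counts over a fixed modulus $m\in M$ gives $|X'|=|Y'|$, which I call $N$.

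For the construction, put $\ell=D$ and, for $0\le a<\ell$, let $\beta(a)\in\digits{\B_s}{\ell}$ be the block with the digit $1$ in position $a$ and $0$ elsewhere (well defined since $a\le D-1=\ell-1$, which is the reason for reducing modulo $D$). Listing $X'$ and $Y'$ with multiplicity as $(x_1,\dots,x_N)$ and $(y_1,\dots,y_N)$, set $u=\beta(x_1)\cdots\beta(x_N)$ and $v=\beta(y_1)\cdots\beta(y_N)$, both of length $N\ell$, which is a multiple of each element of $M$ and of $n$. The one fact to verify is that, for any $m$ dividing $\ell$, the aligned length-$m$ windows of $\beta(a)$ are copies of $0^m$ except for a single window equal to $\zeta_m(a\bmod m):=0^{\,a\bmod m}\,1\,0^{\,m-1-(a\bmod m)}$, and that $\zeta_m(0),\dots,\zeta_m(m-1)$ are pairwise distinct and distinct from $0^m$. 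Granting this, for $m\in M$ the window $0^m$ occurs $N\ell/m-N$ times in each of $(u;m)$ and $(v;m)$ because $|X'|=|Y'|=N$, while $\zeta_m(r)$ occurs $\card\{i:x_i\equiv r\bmod m\}$ times in $(u;m)$ and $\card\{i:y_i\equiv r\bmod m\}$ times in $(v;m)$, and these agree since $X'$ and $Y'$ are residue equivalent for $m$; every other block occurs zero times in each. Hence $u$ and $v$ are block equivalent for $M$. Running the same count with $n$, the failure of residue equivalence of $X'$ and $Y'$ for $n$ yields a residue $r$ for which $\zeta_n(r)$ appears a different number of times in $(u;n)$ than in $(v;n)$, so $u$ and $v$ are not block equivalent for $n$.

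I do not expect a genuine obstacle here: the only step requiring an idea is spotting the encoding $\beta$, which converts ``$a\equiv r\bmod m$'' into ``the window pattern $\zeta_m(r)$ occurs'', and after that the argument is routine bookkeeping. The two small points to keep in mind are that the integer representatives must be bounded (handled by reducing modulo $D$ so that $\beta$ is defined) and that the count of the all-zero window must also be matched across $u$ and $v$ (handled by recording $|X'|=|Y'|$, which in turn follows from residue equivalence for the non-empty set $M$).
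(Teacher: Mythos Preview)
Your proof is correct and follows essentially the same approach as the paper: encode each integer $a$ by a length-$\ell$ block with a single $1$ at position $a$, so that residue equivalence for $m\mid\ell$ becomes block equivalence for $m$, and then apply Lemma~\ref{3}. The only cosmetic differences are that you split cases on $M=\emptyset$ rather than on $n=1$, and you reduce $X,Y$ modulo $D=\lcm(M\cup\{n\})$ to fit positions into blocks of length $D$ (the paper instead keeps $X,Y$ intact and takes $\ell$ a multiple of $D$ exceeding $\max(X\cup Y)$); you also make explicit the equality $|X'|=|Y'|$, which the paper uses tacitly.
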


\begin{proof}
The first possibility is that $n$ is equal to 1 and hence $M$ is empty.  Then the two blocks $(0)$
and $(1)$ of length $1$ satisfy the conclusions of the lemma.
  
The second possibility is that $n$ is greater than $1$.   As in the proof of Lemma~\ref{3},   enlarging $M$ if necessary, 
 we may assume that the multiset of residues of $M$ modulo $n$ is fair, and hence not empty.
 By Lemma~\ref{3}, let $X$ and $Y$ be sets of non-negative 
 integers that are residue equivalent for $M$ but not for $n$. 
Use concatenations of $\card{X}$ many blocks of length $\ell$, where $\ell$ is a multiple of
 $\lcm(M\cup\{n\})$ and strictly greater than the maximum of $X$ and $Y$.  
We represent an element $x\in X$ by the block $w_x$ consisting 
of $\ell-1$ many $0$s  and a unique $1$ at position $x$.  
Recall that the initial position is numbered by $0$ and the last position by $\ell-1$.  
We concatenate these blocks in the order of the elements of $X$, but any order would~do.  
We define the block for $Y$ similarly.

For each $x$ in $X$ and each $m$ in $M$, the sequence $(w_x;m)$ is 
composed of $\ell/m$ many blocks.  
All but one of these are the identically equal to $0$ block.  The remaining element of $(w_x;m)$ 
contains a $1$ at position $r$, where $r=x\mod{m}$.  
Since $X$ and $Y$ are residue equivalent for
 $M$, the blocks representing $X$ and $Y$ are block equivalent for $M$.
 Similarly, the blocks representing $X$ and $Y$ 
 are not block equivalent for $n$, by the hypothesis
 that $X$ and $Y$ are not residue equivalent for $n$ and the above argument.
\end{proof}

We point out that the two blocks $u$ and $v$  
defined in the proof of the  Lemma~\ref{4} are  binary blocks, 
then a fortiori  blocks of digits in any base~$s$.

\begin{definition}\label{2.2}
  Let $V$ be a finite set and let $w$ be a block in $\digits{V}{\ell}$.  For $v\in V$, let
  $\occ(w,v)$ be the number of occurrences in $w$ of $v$.  The \emph{simple discrepancy} of
  $w$ for the set $V$ is
  \[
  D(w,V)=\max\left\{\left|\frac{\occ(w,v)}{\ell}-\frac{1}{\card{V}}\right|:v\in V\right\}.
  \]
\end{definition}

\begin{definition}\label{2.2}
A block $w$ of length $\ell$  of digits in a finite set $V$  is \emph{balanced} 
for an integer~$m$ and $V$
if $\ell$ is a multiple of~$m$ and $D((w;m),\digits{V}{m})=0$.  
A block~$w$ is {\em balanced} for a set~$M$ of integers and  a finite set~$V$
if it is balanced for every integer in~$M$ and~$V$.
 A set $W$ of blocks of length~$\ell$ is  balanced for a set~$M$ of integers and a finite set~$V$
if $\ell$ is a multiple  of each element of $M$ and the concatenation of the blocks in~$W$ 
(in any order) is balanced for $M$~and~$V$.
\end{definition}

Suppose that $W$ is a subset of $\digits{\B_s}{\ell}$ and $\ell$ is divisible by~$m$.  Consider the
uniform measure $\mu_W$ on the infinite sequences of elements of $W$.  If $W$ is balanced for $m$
and $B_s$, then $\mu_W$-almost-every infinite sequence of elements from $W$ is simply normal to
base~$s^m$, when parsed as an infinite sequence of elements in $\digits{\B_s}{m}$,

\begin{lemma}\label{5}
Let $s$ be a base, $M$ be a finite set of positive integers 
and $n$ be a positive integer that does not divide any element of~$M$. 
There is a set $U=U(s, M, n)$ and a positive integer~$\ell_U$, which may be chosen
arbitrarily large, such that 
\begin{itemize}
\item $U$ is balanced for $M$ and not balanced for $n$,
\item if  $s$ is odd,  then $U=\digits{B_s}{\ell_U}\setminus \{z\}$  with  $z$ even,
\item if   $s$ is even, then  $U=\digits{B_s}{\ell_U}\setminus \{z,\tilde{z}\}$ 
with  $z$ even, $\tilde{z}$ odd and $z<\tilde{z}$.
\end{itemize}
Furthermore,  $\ell_U$ and $U$ are uniformly computable from of $s$, $M$ and $n$.
\end{lemma}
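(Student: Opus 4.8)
The strategy is to obtain $U$ by deleting from the full digit alphabet $\digits{\B_s}{\ell_U}$ of base $s^{\ell_U}$ one block (when $s$ is odd) or two blocks (when $s$ is even), chosen so that each deleted block is itself \emph{balanced for every $m\in M$}, while at least one deleted block is \emph{not} balanced for $n$. This works because, whenever $\ell_U$ is a multiple of $m$, the $m$-statistics of a concatenation of length-$\ell_U$ blocks are the sum of the $m$-statistics of the individual blocks, and the concatenation of all of $\digits{\B_s}{\ell_U}$ is itself balanced for every divisor of $\ell_U$; hence deleting a block balanced for $m$ leaves the remaining set balanced for $m$, whereas deleting a block not balanced for $n$ makes the set not balanced for $n$.

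If $n=1$, then $M=\emptyset$: take $\ell_U$ to be any large integer, delete the all-zero block $z=0$ and, when $s$ is even, also the block $\tilde z=1$ (that is, $0^{\ell_U-1}1$); balance for $M=\emptyset$ is vacuous, removing these blocks drops the frequency of the digit $0$ strictly below $1/s$, and the parity and order conditions hold trivially. If $n\ge 2$, apply Lemma~\ref{4} to obtain binary blocks $u$ and $v$ of a common length $L$ divisible by $\lcm(M\cup\{n\})$, block equivalent for $M$ but not for $n$; inspecting the proof of Lemma~\ref{4}, one may additionally take $L$ to be even. Set $\ell_U:=L\,s^L$ and let $z$ be the length-$\ell_U$ block obtained by concatenating, in an order to be fixed below, all $s^L$ blocks of length $L$ on $\B_s$, except that the single copy of $u$ is replaced by a second copy of $v$. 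Since replacing $u$ by $v$ does not change $m$-statistics for any $m\in M$ (block equivalence) but does change $n$-statistics, and since the concatenation of all of $\digits{\B_s}{L}$ is balanced for every divisor of $L$, the block $z$ is balanced for every $m\in M$ — here $m\,s^m\mid\ell_U$ because $m\mid L$ and $s^m\mid s^L$ — and is not balanced for $n$. By the opening observation, $U:=\digits{\B_s}{\ell_U}\setminus\{z\}$ is then balanced for $M$ and not balanced for $n$. As $L$ ranges over its admissible values, $\ell_U=L\,s^L$ is arbitrarily large, and since every step is a finitary procedure in $s$, $M$, $n$, both $U$ and $\ell_U$ are uniformly computable.

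It remains to satisfy the parity and order constraints, which is the one place real care is needed. When $s$ is even, I would order the concatenation defining $z$ with the all-zero block $0^L$ last, so that the last base-$s$ digit of $z$, hence $z$ itself, is even, and with the block $0^{L-1}1$ first, so that $z<2s^{\ell_U-L}$. I would then take $\tilde z$ to be the length-$\ell_U$ block formed by writing each block of length $\lambda:=\lcm(M\cup\{n\})$ on $\B_s$ exactly $\ell_U/(\lambda s^{\lambda})$ times, ordered with $(s-1)^{\lambda}$ first and $0^{\lambda-1}1$ last; then $\tilde z$ is balanced for $\lambda$ — hence for $M$ and for $n$ — is odd, and satisfies $\tilde z\ge (s^{\lambda}-1)s^{\ell_U-\lambda}>z$ once $L$ is large. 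Deleting both $z$ and $\tilde z$ keeps $U$ balanced for $M$ (both are balanced for $M$) and leaves it not balanced for $n$, because its $n$-statistics change from the balanced vector by a non-constant vector plus a constant one. When $s$ is odd, every power of $s^L$ is odd, so a computation modulo $2$ of the integer represented by $z$ gives $z\equiv\tfrac{s^L(s^L-1)}{2}-u+v\equiv\tfrac{s-1}{2}L+a+b\pmod 2$, where $u,v$ are read as integers and $a,b$ are the numbers of $1$'s in $u,v$; since $a=\card{X}=\card{Y}=b$ in the notation of the proof of Lemma~\ref{4} and $L$ is even, this vanishes, so $z$ is even. The main obstacle is exactly this bookkeeping: forcing the deleted block or blocks to be simultaneously balanced for $M$, off-balance for $n$, of the prescribed parity, and (for even $s$) correctly ordered — everything else follows routinely once Lemma~\ref{4} is available.
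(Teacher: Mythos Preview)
Your argument is correct and follows essentially the same plan as the paper: build $z$ (and, for even $s$, $\tilde z$) as a concatenation of all length-$L$ blocks with one $u$/$v$ swap, so that the deleted blocks are balanced for $M$ but $z$ is not balanced for $n$; then $U$ inherits the desired (im)balances from the full alphabet. The differences are purely in the parity bookkeeping. The paper takes $2c$ copies of each length-$\ell$ block, so that the digit-sum of $z$ is automatically even and hence $z$ is even when $s$ is odd; you instead take one copy of each block, force $L$ even, and do a direct mod-$2$ computation using that $u,v$ are binary with the same number of $1$'s. For even $s$, the paper draws $\tilde z$ from the same family $W$ as $z$ (so $\tilde z$ is also unbalanced for $n$, in the identical way), whereas you manufacture $\tilde z$ separately as a block balanced for $\lcm(M\cup\{n\})$, hence balanced for both $M$ and $n$; your observation that subtracting a constant $n$-statistics vector preserves the imbalance created by removing $z$ is what makes this work. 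Both routes are fine for the downstream Lemmas~\ref{7} and~\ref{8}, which only use that $U$ is balanced for $M$ and not balanced for $n$. Two small remarks: your ordering of $z$ with $0^{L-1}1$ first requires $u\neq 0^{L-1}1$, which does hold by inspection of the proof of Lemma~\ref{4} (the block representing $X$ always has a $1$ in its first length-$\ell$ segment), but you might say so explicitly; and the inequality $(s^\lambda-1)s^{L-\lambda}>2$ holds for all admissible $L$, not just large ones, since $\lambda\ge n\ge 2$.
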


The fact that $U$ and $\ell_U$ may be chosen
arbitrarily large is crucial in Section~\ref{sec:HD}. 

\begin{proof}
If $s$ is equal to $2$ and $n$ is equal to $1$ then let $u$ and $v$ be the $n$-inequivalent blocks $(0,1)$ and $(1,1)$.
Otherwise, let  $u$ and $v$ be blocks of digits in base $s$ ensured by Lemma~\ref{4}  
to be block equivalent for $M$ but not for~$n$. The length $\ell$
of the blocks $u$ and $v$ is a  
positive integer that is divisible by all of the elements of $M$ and also by $n$.
Fix any positive integer $c$ and let $w_0$ be a 
block of length $2 c \ell s^\ell$ obtained by concatenating 
$2c$ many instances of each of the $s^{\ell}$ elements of $\digits{\B_s}{\ell}$ in some order.  
By symmetry, each element of $s$ occurs in $w_0$ exactly as often as any other
element does.  Similarly, if $k$ divides $\ell$, then $(w_0;k)$ can be obtained by concatenating the
elements of $\digits{\B_{s^k}}{\ell/k}$ in the order naturally 
induced by $w_0$ and so each digit in base
$s^k$ appears in $(w_0;k)$ exactly as often as any other digit does.
Thus, for each $k$
that divides $\ell$, the block $w_0$   is balanced for $k$.

Now let $W$ be the set of blocks  in $\digits{\B_s}{2c\ell s^\ell}$
obtained by concatenation of $2c$ many instances of $u$ and $2c$ many instances of each of the elements in 
$\digits{\B_s}{\ell}\setminus\{v\}$.
Observe that each element $w$ in $W$ 
consists of $4c$ instances of $u$ and  $2c$ instances of  each of the blocks in $\digits{\B_s}{\ell}\setminus\{u,v\}$.
No  instances of $v$ have been used.
Since $u$ and $v$ are block equivalent for~$M$, $w_0$ is block equivalent to every $w\in W$ for~$M$. 
Since $w_0$ is balanced for $M$, each $w\in W$ is also balanced for~$M$.  
Similarly, since $u$ and $v$ are not block equivalent for $n$ and $w_0$ 
is balanced for~$n$, each $w\in W$  is not balanced for~$n$. 
In fact, all the elements in $W$ are identically imbalanced for~$n$.
Then,  there is a block $t\in\digits{\B_s}{n}$ and a positive rational constant $\gamma$ such that
 for any two   blocks $w$ and $\tilde{w}$ in $W$,  
\[
\frac{\occ((w;n),t)}{|w|/n}=
\frac{\occ((\tilde{w};n),t)}{|\tilde{w}|/n}<\frac{1}{s^n}-\gamma.
\]

Let $z$ and $\tilde{z}$ be the lexicographically least pair of blocks in $W$ such that $z$ ends with
an even digit, $\tilde{z}$ ends with an odd digit and $z$ is less than $\tilde{z}$.  
The existence of these blocks  $z$ and $\tilde{z}$  follows from the fact that  the blocks 
$u$ and $v$ have  length $\ell$ greater than or equal to $2$. 
This is ensured by the choice of $u$ and $v$ in the special case of $s=2$ and $n=1$, 
and by Lemma~\ref{4} in all the other cases.

If $s$ is even, then $z$ is even and $\tilde{z}$ is odd.
If $s$ is odd, then $z$ is even, since the sum of its digits is even (we have
concatenated an even number of instances of each block).
Let  the length $\ell_U$ be equal to $2c\ell s^\ell$ 
(which is the length of $z$ and $\tilde{z}$).
If  $s$ is odd,  let $U=\digits{B_s}{\ell_U}\setminus \{z\}$. 
If   $s$ is even, let $U=\digits{B_s}{\ell_U}\setminus \{z,\tilde{z}\}$. 

We argue for the case $s$ is even. 
Since $U \cup \{z, \tilde{z}\}$ is balanced for $M$ and both $z$ and $\tilde{z}$ are
also balanced for $M$, we deduce that $U$ is also balanced for $M$.  
Similarly, $U \cup \{z, \tilde{z}\}$ is balanced for $n$ and 
$z$ and $\tilde{z}$ are identically imbalanced for $n$, thus $U$ is not balanced for $n$.
The case $s$ is odd is similar.

Finally, the computability of $U$ follows from the fact that  
$z$ and $\tilde{z}$ are  uniformly computable in terms of $s$, $M$ and $n$.
\end{proof}

\subsection{A lower bound for simple discrepancy in Cantor sets}

Our next lemma is a classical statement saying that, for a finite set $V$, if $\ell$ is large enough
then a large proportion of blocks of length $\ell$ of digits from the set $V$
have small  simple discrepancy.

\begin{lemma}[see Theorem~148, \protect{\cite{HarWri08}}]\label{6}
  For any finite set $V$, for any positive real numbers $\epsilon$ and~$\delta$,
  there is a positive integer~$\ell_0$ such that for all $\ell\geq \ell_0$,
  \[
    {\card}\Bigl\{v\in \digits{V}{\ell}: D(v,V)<\epsilon\Bigr\} >(1-\delta) (\card{V})^\ell.   
  \]
  Furthermore, $\ell_0$ is a computable function of $V$, $\epsilon$ and~$\delta$.
\end{lemma}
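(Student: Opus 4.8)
The plan is to prove Lemma~\ref{6} by the standard large-deviation (concentration) argument for the multinomial distribution, using only elementary estimates so that the computability of $\ell_0$ is transparent.  Write $V=\{v_1,\dots,v_t\}$ with $t=\card V$, and equip $\digits{V}{\ell}$ with the uniform measure, under which the counts $(\occ(w,v_1),\dots,\occ(w,v_t))$ follow a multinomial distribution with $\ell$ trials and equal cell probabilities $1/t$.  For a fixed digit $v\in V$, the count $\occ(w,v)$ is a sum of $\ell$ independent $\{0,1\}$-valued random variables each with mean $1/t$, so $D(w,V)\ge\epsilon$ forces $\bigl|\occ(w,v)/\ell-1/t\bigr|\ge\epsilon$ for at least one $v$.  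A union bound over the $t$ digits then reduces everything to a single one-dimensional tail estimate.

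First I would record a Chernoff/Hoeffding-type bound: for a Binomial$(\ell,1/t)$ variable $N$, one has $\Pr\bigl[|N/\ell-1/t|\ge\epsilon\bigr]\le 2e^{-2\epsilon^2\ell}$ (Hoeffding's inequality for bounded variables), or, if one prefers a fully self-contained argument, the weaker Chebyshev bound $\Pr\bigl[|N/\ell-1/t|\ge\epsilon\bigr]\le \frac{1}{4\epsilon^2\ell}$, which already suffices.  Either way, combining with the union bound gives
\[
\frac{\card\{w\in\digits{V}{\ell}:D(w,V)\ge\epsilon\}}{t^\ell}\ \le\ 2t\,e^{-2\epsilon^2\ell}\qquad\Bigl(\text{or }\ \frac{t}{4\epsilon^2\ell}\Bigr).
\]
Then I would simply choose $\ell_0$ to be the least positive integer for which the right-hand side is at most $\delta$; for the Hoeffding version this is $\ell_0=\bigl\lceil \frac{1}{2\epsilon^2}\log(2t/\delta)\bigr\rceil$, an explicit and manifestly computable function of $V$ (through $t$), $\epsilon$ and $\delta$.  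For every $\ell\ge\ell_0$ the complementary count is then strictly greater than $(1-\delta)t^\ell$, which is exactly the claimed inequality.

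There is no real obstacle here; the lemma is classical (it is the combinatorial form of the weak law of large numbers / Borel's normal number estimate, and is quoted as Theorem~148 in Hardy--Wright).  The only mild point to be careful about is to state the tail bound in a form that does not hide any non-effective constants, so that the final $\ell_0$ can honestly be called a computable function of the three parameters; using the explicit Hoeffding or Chebyshev constants above takes care of this.  I would also note the degenerate case $\card V=1$ separately — there $D(w,V)=0$ for every block, so any $\ell_0$ works — although this case never arises in the applications.
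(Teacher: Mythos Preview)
Your argument is correct and is exactly the standard one; note that the paper does not actually prove Lemma~\ref{6} but simply cites it as Theorem~148 of Hardy--Wright, so there is no paper-proof to compare against. The Hardy--Wright argument is the same elementary counting/variance estimate you sketch (Chebyshev on the binomial count, plus a union bound over the $t$ digits), so your write-up is in line with the cited reference and makes the computability of $\ell_0$ explicit, which is all the paper needs.
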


Our second lemma will be used to ensure simple normality with respect 
to bases $s^m$, with $m$ in a finite set $M$.

\begin{lemma}\label{7}
  Let $s$ be a base, $M$ be a finite set of positive integers and $n$ be a positive integer that does
  not divide any element of $M$.  Let $U$ be as in Lemma~\ref{5} and let $\ell_U$ be
  the length of the elements of $U$.

  For any positive real numbers~$\epsilon$ and~$\delta$, there is a positive integer 
$\ell_0$ such that for all $\ell\geq \ell_0$,
  \[
  \card\Bigl\{u\in \digits{\B_s}{\ell \ell_U}: (u;\ell_U)\in\digits{U}{\ell}
  \text{ and }\forall m\in M, D\bigl((u;m),\digits{\B_s}{m}\bigr)<\epsilon\Bigr
  \} >(1-\delta) (\card{U})^{\, \ell}.
     \]
 Furthermore, $\ell_0$ is a computable function of $s$, $M$, $n$, $\epsilon$ and~$\delta$.
\end{lemma}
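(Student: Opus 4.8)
The plan is to derive Lemma~\ref{7} from Lemma~\ref{6} applied with the finite set $V$ taken to be $U$ itself. The two features of $U$ provided by Lemma~\ref{5} that make this work are that $\ell_U$ is a multiple of every $m\in M$ and that $U$ is balanced for $M$; everything else is bookkeeping.

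First I would extract the arithmetic content of balance. Fix $m\in M$ and let $W$ be the concatenation of the elements of $U$ in some fixed order. Since $m$ divides $\ell_U$, parsing $W$ into blocks of length $m$ respects the boundaries between the successive length-$\ell_U$ pieces, so $\occ((W;m),z)=\sum_{w\in U}\occ((w;m),z)$ for every $z\in\digits{\B_s}{m}$. Moreover $U$ being balanced for $m$ means exactly that $W$ is balanced for $m$, so $(W;m)$ has $\card{U}\ell_U/m$ blocks each occurring with frequency $1/s^m$, whence this common value equals $\frac{\card{U}\,\ell_U}{m\,s^m}$.

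The only step with real content is the estimate $D\bigl((u;m),\digits{\B_s}{m}\bigr)\le \card{U}\cdot D\bigl((u;\ell_U),U\bigr)$, valid for every $m\in M$ and every $u\in\digits{\B_s}{\ell\ell_U}$ with $(u;\ell_U)\in\digits{U}{\ell}$. To obtain it, write $(u;\ell_U)=(w_1,\dots,w_\ell)$ and set $c_w=\card\{i:w_i=w\}$ for $w\in U$. Since $m\mid\ell_U$ we have $\occ((u;m),z)=\sum_{i=1}^\ell\occ((w_i;m),z)=\sum_{w\in U}c_w\,\occ((w;m),z)$; using that $(u;m)$ has $\ell\ell_U/m$ blocks, that $\occ((w;m),z)\le\ell_U/m$ for every $w$, and the identity of the previous paragraph, the triangle inequality gives
\[
\left|\frac{\occ((u;m),z)}{\ell\ell_U/m}-\frac{1}{s^m}\right|
=\frac{m}{\ell\ell_U}\left|\sum_{w\in U}\Bigl(c_w-\frac{\ell}{\card{U}}\Bigr)\occ((w;m),z)\right|
\le\sum_{w\in U}\left|\frac{c_w}{\ell}-\frac{1}{\card{U}}\right|
\le\card{U}\cdot D\bigl((u;\ell_U),U\bigr).
\]
Hence $D\bigl((u;\ell_U),U\bigr)<\epsilon/\card{U}$ forces $D\bigl((u;m),\digits{\B_s}{m}\bigr)<\epsilon$ for all $m\in M$ at once.

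To finish I would invoke Lemma~\ref{6} with the set $U$, with $\epsilon/\card{U}$ in place of $\epsilon$, and with the given $\delta$: this yields a computable $\ell_0$ such that for all $\ell\ge\ell_0$ more than $(1-\delta)(\card{U})^\ell$ blocks $v\in\digits{U}{\ell}$ satisfy $D(v,U)<\epsilon/\card{U}$. Concatenation is a bijection from $\digits{U}{\ell}$ onto the set of $u\in\digits{\B_s}{\ell\ell_U}$ with $(u;\ell_U)\in\digits{U}{\ell}$, so by the core estimate each such $v$ produces a $u$ counted by the lemma, which gives the asserted bound. Uniform computability of $\ell_0$ in $s,M,n,\epsilon,\delta$ then follows because $U$ and $\ell_U$ are uniformly computable from $s,M,n$ by Lemma~\ref{5}, $\card{U}$ is computable from $U$, and $\ell_0$ is a computable function of $U,\epsilon/\card{U},\delta$ by Lemma~\ref{6}. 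I do not anticipate a genuine obstacle: the crux is the core estimate, whose essential inputs are the alignment $m\mid\ell_U$ (which removes all boundary effects when $u$ is re-parsed into blocks of length $m$) and the balance of $U$ for $M$ (which pins the average of $\occ((w;m),z)$ over $w\in U$ to the exact value $\ell_U/(m\,s^m)$); once these are in hand, the remaining union bound over the finitely many pairs $(m,z)$ is precisely what Lemma~\ref{6} supplies.
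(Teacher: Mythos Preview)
Your proof is correct and follows essentially the same approach as the paper: both use the balance of $U$ for $M$ together with $m\mid\ell_U$ to bound $D\bigl((u;m),\digits{\B_s}{m}\bigr)$ by a constant multiple of $D\bigl((u;\ell_U),U\bigr)$, and then apply Lemma~\ref{6} to $U$ with a rescaled $\epsilon$. The only difference is cosmetic: the paper bounds the upper and lower deviations separately and obtains the constant $s^{\ell_U}$, whereas your single triangle-inequality step yields the slightly sharper constant $\card{U}$.
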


Keeping its notation, Lemma~\ref{7} asserts that, if $\ell$ is large enough, 
then, an arbitrarily large proportion of the $(\card{U})^{\, \ell}$ blocks of length $\ell$
of elements of the set $U$ has, for each $m\in M$, 
the property that, viewed as blocks of 
length $\ell \ell_U / m$ of digits in $\{0, 1, \ldots , s^m - 1\}$, they have arbitrarily
small simple discrepancy for the base $s^m$.
This holds because
both, $z$ and $\tilde{z}$, are balanced for $m$.

\begin{proof}
Consider a block $u\in \digits{\B_s}{\ell  \ell_U}$ such that $(u;\ell_U)\in\digits{U}{\ell}$.
Let  $m\in M$ and $d\in\digits{\B_s}{m}$.  
We count the number of occurrences in $(u;m)$  of $d$ ,
\[
\occ((u;m),d)=\sum_{w\in U}\occ((u;\ell_U),w) \occ((w;m),d).
\]
If $D((u;\ell_U),U)<\epsilon_1$ then, by the definition of discrepancy $D$, for all $w\in U$, 
\[
\frac{\occ((u;\ell_U),w)}{\ell}<\frac{1}{\card{U}}+\epsilon_1. 
\]
Then, 
\[
\occ((u;m),d)<\ell\left(\frac{1}{\card{U}}+\epsilon_1\right) \sum_{w\in U}\occ((w;m),d).
\]
Since $U$ is balanced for $M$,
\[
\sum_{w\in U}\occ((w;m),d)=\frac{\card{U}(\ell_U/m)}{\card{\digits{\B_s}{m}}}
=\frac{\card{U}(\ell_U/m)}{s^m}.
\]
Then,
\begin{align*}
  \occ((u;m),d)<&\ell\left(\frac{1}{\card{U}}+\epsilon_1\right) \frac{\card{U}(\ell_U/m)}{s^m}\\
  <& \ell \left(\frac{1}{s^m}+\frac{\epsilon_1\card{U}}{s^m}\right)\frac{\ell_U}{m}.
\end{align*}
Since $\ell \ell_U/m$ is the length of $(u;m)$, we deduce that 
\[
\frac{\occ((u;m),d)}{|(u;m)|}<\frac{1}{s^m}+\epsilon_1 s^{\ell_U}.
\]
We obtain the analogous lower bound on $\frac{\occ((u;m),d)}{|(u;m)|}$ similarly.  
Thus, for any $\epsilon>0$ and any
$u\in\digits{\B_s}{\ell \ell_U}$ satisfying $D((u;\ell_U),U)<\epsilon s^{-\ell_U}$, we have
$D((u;m),\B_s)<\epsilon$.

Now, let $\epsilon$ and~$\delta$ be positive real numbers.  
By Lemma~\ref{6}, there is an 
$\ell_0$ such that for all $\ell\geq\ell_0$,
\[
  {\card}\Bigl\{u\in \digits{U}{\ell}: D(u,U)<\epsilon s^{-\ell_U} \Bigr\} >(1-\delta) (\card{U})^\ell.   
\]
Equivalently,
\[
  {\card}\Bigl\{u\in \digits{\B_s}{\ell \ell_U}: (u,\ell_U)\in\digits{U}{\ell}\text{ and }D((u;\ell_U),U)
  <\epsilon s^{-\ell_U} \Bigr\} >(1-\delta) (\card{U})^\ell.   
\]
Hence,
\[
  {\card}\Bigl\{u\in \digits{\B_s}{\ell \ell_U}: (u,\ell_U)\in\digits{U}{\ell}\text{ and }
  D((u;m),\digits{\B_s}{m})<\epsilon\Bigr\} >(1-\delta) (\card{U})^\ell,
\]
as required.  
\end{proof}

Our third lemma is the key ingredient to deny simple normality to the base $s^n$.

\begin{lemma}\label{8}
  Let $s$ be a base, $M$ be a finite set of positive integers and $n$ be a positive integer that
  does not divide any element of $M$.  Let $U$ be fixed as in Lemma~\ref{5} and let $\ell_U$ be the
  length of the elements of $U$.

  There is a positive real number~$\epsilon$ and an element $d$ in $\digits{\B_s}{n}$ such that for any positive
  real number~$\delta$, there 
  is a positive integer $\ell_0$ such that for all $\ell\geq \ell_0$,
  \[
  \card\Bigl\{u\in \digits{\B_s}{\ell \ell_U}: (u;\ell_U)\in\digits{U}{\ell}\text{ and }
  \frac{\occ((u;n),d)}{|(u;n)|}< \frac{1}{s^n} -\epsilon \Bigr\}
     >(1-\delta) (\card{U})^{\, \ell},
     \]
 Furthermore, $\epsilon$, $d$ and $\ell_0$ are computable functions of $s$, $M$ and $n$.
\end{lemma}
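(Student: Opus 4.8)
The plan is to mimic the proof of Lemma~\ref{7}, using the set $U$ of Lemma~\ref{5} together with Lemma~\ref{6}, but exploiting that $U$ is \emph{not} balanced for $n$ rather than that it is balanced for $M$. Write $\overline U$ for the concatenation, in some fixed order, of all the elements of $U$; this is a block of length $\ell_U\card{U}$, and since $\ell_U$ is a multiple of $n$ the parse $(\overline U;n)$ is the concatenation of the parses of the length-$\ell_U$ sub-blocks into blocks of length $n$, so $\occ((\overline U;n),d)=\sum_{w\in U}\occ((w;n),d)$ for every $d\in\digits{\B_s}{n}$. By Lemma~\ref{5}, $U$ is not balanced for $n$, so not all of the $s^n\ge 2$ integers $\occ((\overline U;n),d)$ equal their common average $|(\overline U;n)|/s^n$; since they sum to $|(\overline U;n)|$, as soon as one of them exceeds the average another must fall short of it. Fix a block $d$ with $\occ((\overline U;n),d)<|(\overline U;n)|/s^n$, put $\epsilon_0:=\tfrac1{s^n}-\occ((\overline U;n),d)/|(\overline U;n)|>0$ and $\epsilon:=\epsilon_0/3$. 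Both $d$ and $\epsilon_0$ are obtained by a finite computation from the data of Lemma~\ref{5}: for instance, removing from $\digits{\B_s}{\ell_U}$ the one or two blocks on which the block $t$ of that proof is under-represented makes $t$ over-represented in $\overline U$, forcing some other block $d$ to be under-represented; hence $d$ and $\epsilon_0$ are computable from $s$, $M$, and $n$.

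Given $\delta>0$, set $\epsilon_1:=\epsilon_0/(2\card{U})$. Suppose $u\in\digits{\B_s}{\ell\ell_U}$ satisfies $(u;\ell_U)\in\digits{U}{\ell}$ and $D((u;\ell_U),U)<\epsilon_1$. Using $n\mid\ell_U$, the nonnegativity of the counts $\occ((w;n),d)$, and the bound $\occ((u;\ell_U),w)<\ell(\tfrac1{\card U}+\epsilon_1)$ coming from the discrepancy hypothesis, we get
\[
\occ((u;n),d)=\sum_{w\in U}\occ((u;\ell_U),w)\,\occ((w;n),d)
\le\ell\Bigl(\tfrac1{\card U}+\epsilon_1\Bigr)\sum_{w\in U}\occ((w;n),d).
\]
Since $\sum_{w\in U}\occ((w;n),d)=\occ((\overline U;n),d)=\card{U}\,\tfrac{\ell_U}{n}\bigl(\tfrac1{s^n}-\epsilon_0\bigr)$ and $|(u;n)|=\ell\ell_U/n$, dividing gives
\[
\frac{\occ((u;n),d)}{|(u;n)|}\le(1+\epsilon_1\card{U})\Bigl(\tfrac1{s^n}-\epsilon_0\Bigr)
\le\Bigl(\tfrac1{s^n}-\epsilon_0\Bigr)+\epsilon_1\card{U}\le\tfrac1{s^n}-\tfrac{\epsilon_0}{2}<\tfrac1{s^n}-\epsilon .
\]

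To finish, I would apply Lemma~\ref{6} to the finite set $U$ with the tolerances $\epsilon_1$ and $\delta$, obtaining a computable $\ell_0$ such that for all $\ell\ge\ell_0$ there are more than $(1-\delta)(\card U)^\ell$ blocks $u\in\digits{U}{\ell}$ with $D(u,U)<\epsilon_1$. Viewed in $\digits{\B_s}{\ell\ell_U}$, each of these blocks has $(u;\ell_U)\in\digits{U}{\ell}$ and, by the last display, $\occ((u;n),d)/|(u;n)|<1/s^n-\epsilon$, so it lies in the set whose cardinality is to be bounded below; this yields the stated inequality. Tracking the parameters, $d$ and $\epsilon$ depend only on $s$, $M$, $n$, and $\ell_0$ is then computable from $s$, $M$, $n$, and $\delta$, which gives the computability assertion. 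The one place demanding attention is the orientation of the inequalities: one must pick a length-$n$ block $d$ that is under-represented (not over-represented) in $U$ and then estimate $\occ((u;n),d)$ from above, discarding the lower bound on $\occ((u;\ell_U),w)$; once that orientation is fixed the computation is the same bookkeeping as in Lemma~\ref{7}.
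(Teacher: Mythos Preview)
Your proposal is correct and follows essentially the same approach as the paper: both pick an under-represented block $d\in\digits{\B_s}{n}$ using that $U$ is not balanced for $n$, bound $\occ((u;n),d)/|(u;n)|$ above via the simple discrepancy $D((u;\ell_U),U)$ exactly as in Lemma~\ref{7}, and conclude by invoking Lemma~\ref{6}. Your parameters $\epsilon_0$ and $\epsilon_1=\epsilon_0/(2\card U)$ are explicit versions of the paper's constant $c$ and its ``sufficiently small'' $\epsilon_1$, and your pigeonhole justification for the existence of an under-represented $d$ is a welcome addition.
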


We point out that $\epsilon$ in Lemma~\ref{8} does not depend on~$\delta$.

\begin{proof}
  We argue as in Lemma~\ref{7}.  Consider a block $u\in \digits{\B_s}{\ell \ell_U}$ and a
  positive real number~$\epsilon_1$ such that $(u;\ell_U)\in\digits{U}{\ell}$ and $D((u;\ell_U),U)<\epsilon_1$.  As
  above, for any $d\in\digits{\B_s}{n}$, we have
\[
\occ((u;n),d)<\ell\left(\frac{1}{\card{U}}+\epsilon_1\right) \sum_{w\in U}\occ((w;n),d).
\]
Since $U$ is not balanced for $n$, there is some $d\in\digits{\B_s}{n}$ 
and a positive constant $c$ such that
\[
\sum_{w\in U}\occ((w;n),d)=\frac{\card{U}(\ell_U/n)}{\card{\digits{\B_s}{n}}}-c
=\frac{\card{U}(\ell_U/n)}{s^n}-c. 
\]
Thus,
\begin{align*}
\occ((u;n),d)<& \ \ell\left(\frac{1}{\card{U}}+\epsilon_1\right)
\left(\frac{\card{U}(\ell_U/n)}{s^n}-c\right)   \text{ and }\\
\frac{\occ((u;n),d)}{ \ell \ell_U/n}<&\ \frac{1}{s^n}-\frac{c}{\card{U}\ell_U/n}+\epsilon_1 s^{\ell_U}.
\end{align*}
If $\epsilon_1$ is sufficiently small, then
\[
\frac{\occ((u;n),d)}{ \ell \ell_U/n} <\frac{1}{s^n}-\frac{c}{2\card{U}\ell_U/n}.
\]
Let $\epsilon$ equal $\frac{c}{2\card{U}\ell_U/n}$.  
The proof is completed by application of Lemma~\ref{6}.
\end{proof}

\subsection{Exponential sums on Cantor sets}

\begin{notation}
  We let $e(x)$ denote ${\rm e}^{2\pi i x}$.  
  We use $\base{b}{r}$ to denote $\ceil{b/\log r}$, where
  $\log$ refers to logarithm in base~${\rm e}$. 
  We say that a rational number $x$ in the unit interval is $s$-adic if
  $x=\sum_{j=1}^ad_js^{-j}$ for digits $d_j$ in $\{0,\dots,s-1\}$.  In this case, we say that $x$
  has precision $a$.
\end{notation}

\begin{lemma}[Hilfssatz~5, \cite{Sch61}]\label{9}
Let $r$ and $s$ be multiplicatively independent bases.  There is a constant $c$, with
$0<c<1/2$, depending only on $r$ and $s,$ such that for all positive integers $k$ and $l$ with
$l\geq s^k$ and for every positive integer $n$,
\[
  \sum_{p=0}^{n-1}\prod_{q=k+1}^\infty |\cos(\pi r^p l/s^q)|\leq 2 n^{1-c}.    
  \]
Furthermore, $c$ is a computable function of $r$ and $s$.\footnote{Actually, Schmidt asserts 
the computability of $c$ in separate paragraph (page 309 in the same article): 
``Wir stellen zun\"achst fest,
da\ss\  man mit etwas mehr M\"uhe Konstanten $a_{20}(r, s)$ aus Hilfssatz~5 explizit berechnen
k\"onnte, und da\ss\ dann $x$ eine eindeutig definierte Zahl ist.''}
\end{lemma}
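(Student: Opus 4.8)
\emph{Strategy and reduction to a digit count.}
The plan is to turn the infinite product into an exponential‑decay estimate and then prove that, for all but a power‑saving few of the $p<n$, the decay is genuine; the reason it is genuine for most $p$ is that the base‑$s$ digits of $r^{p}l$ cannot be too degenerate, and this is where multiplicative independence of $r$ and $s$ is used, quantitatively. Put $q=k+j$ and $\theta_{p}=r^{p}l/s^{k}$, so the product is $\prod_{j\ge 1}\lvert\cos(\pi\theta_{p}s^{-j})\rvert$, and the hypothesis $l\ge s^{k}$ gives $\theta_{p}\ge r^{p}\ge 1$. Writing $\|x\|$ for the distance from $x$ to the nearest integer, one has the elementary inequality $\lvert\cos\pi x\rvert\le 1-2\|x\|^{2}\le e^{-2\|x\|^{2}}$ (from $1-\cos\pi x=2\sin^{2}(\pi x/2)\ge 2\|x\|^{2}$ on $[0,\tfrac12]$), so for any cutoff $K$,
\[
\prod_{q\ge k+1}\lvert\cos(\pi r^{p}l/s^{q})\rvert\ \le\ \exp\!\Bigl(-2\sum_{j=1}^{K}\|\theta_{p}s^{-j}\|^{2}\Bigr).
\]
The fractional part of $\theta_{p}s^{-j}=r^{p}l/s^{k+j}$ is read off the base‑$s$ digits of $r^{p}l$ below position $k+j$: if $s\ge 3$ and the digit in position $k+j-1$ lies in $\{1,\dots,s-2\}$ then $\|\theta_{p}s^{-j}\|\ge 1/s$, while if $s=2$ the bound $\|\theta_{p}s^{-j}\|\ge 1/4$ holds whenever two consecutive binary digits of $r^{p}l$ near position $k+j$ differ. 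Hence $\sum_{j=1}^{K}\|\theta_{p}s^{-j}\|^{2}\ge c_{s}N_{K}(p)$, where $N_{K}(p)$ counts these ``good'' places among the first $K$ and $c_{s}>0$ depends only on $s$.

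\emph{The core count.}
Fix $K=\lfloor\gamma\log n\rfloor$ with $\gamma=\gamma(r,s)>0$ small, to be chosen, and call $p$ \emph{bad} if $N_{K}(p)\le K/C_{s}$ for a suitable constant $C_{s}$. If $p$ is bad, the length‑$K$ digit block of $r^{p}l$ beginning at position $k$ lies in the set $\mathcal D$ of blocks that are ``mostly at the extremes $0,s-1$'' (for $s\ge3$) or ``mostly constant'' (for $s=2$); a binomial estimate gives $\lvert\mathcal D\rvert\le s^{\eta K}$ with $\eta=\eta(s)<1$ — for $s=2$ it is precisely the two‑digit reformulation above that keeps $\eta<1$. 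A prescribed digit block at positions $k,\dots,k+K-1$ confines $r^{p}l$ to a fixed interval of $s^{k}$ residue classes modulo $s^{k+K}$. Now the arithmetic input: write $r=s^{a}r_{1}$ with $\gcd(r_{1},s)=1$; since $r$ and $s$ are multiplicatively independent one has $r_{1}\ge2$, and a lifting‑the‑exponent argument shows the multiplicative order of $r_{1}$ modulo $s^{m}$ is at least $s^{m-m_{0}}$ for a constant $m_{0}=m_{0}(r,s)$. Therefore $(r^{p}l\bmod s^{k+K})_{p}$ is periodic with period $\ge s^{k+K-m_{0}}$, and within one period it meets the union of the $\lvert\mathcal D\rvert$ target intervals at most $\lvert\mathcal D\rvert s^{k}$ times; once $K>m_{0}/(1-\eta)$ this is smaller than the period, so
\[
\#\{\,p<n:\ p\ \text{bad}\,\}\ \ll\ \frac{n}{s^{k+K-m_{0}}}\cdot\lvert\mathcal D\rvert s^{k}+\lvert\mathcal D\rvert s^{k}\ \ll\ s^{m_{0}}\,n\,s^{(\eta-1)K}+s^{k+\eta K},
\]
and for $\gamma$ small this is $\ll n^{1-c_{1}}$ with $c_{1}=c_{1}(r,s)>0$, except in the regime where $s^{k}$ is not small compared with $n$, which together with the $O_{s}(\log n)$ values of $p$ having $\theta_{p}<s^{K}$ is handled by the trivial bound (each product is $\le1$). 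Slightly more bookkeeping is needed when $r$ and $s$ share a prime factor: then $r^{p}l$ is increasingly divisible by $s$ and the relevant digit window slides, but only finitely many $p$ (depending on $k$) are affected and for the rest the window stabilises.

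\emph{Conclusion and computability.}
Summing over $p<n$: the $O_{s}(\log n)$ small‑$\theta_{p}$ terms and the $\ll n^{1-c_{1}}$ bad terms each contribute at most $1$; every good $p$ contributes at most $e^{-2c_{s}K/C_{s}}=n^{-c_{2}}$ with $c_{2}=c_{2}(r,s)\gamma>0$, hence at most $n^{1-c_{2}}$ in all. Choosing $\gamma$ and then $c\in(0,\tfrac12)$ with $c<\min(c_{1},c_{2})$, and using the trivial bound for small $n$, the whole sum is $\le 2n^{1-c}$. Every constant produced along the way — the $2$ in $\lvert\cos\pi x\rvert\le e^{-2\|x\|^{2}}$, the combinatorial exponent $\eta(s)$, the order bound $m_{0}(r,s)$, the admissible $\gamma$ — is explicitly computable from $r$ and $s$, so $c$ is a computable function of $r$ and $s$.

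\emph{Main obstacle.}
The difficulty is concentrated in the core count, and specifically in extracting a power saving $n^{1-c}$ rather than merely $o(n)$: this is what forces the quantitative use of multiplicative independence (the lower bound $s^{m-m_{0}}$ for the order of $r_{1}$ modulo $s^{m}$), the need to keep $\eta(s)<1$ (hence the two‑digit device when $s=2$), and the careful balancing of the cutoff $K\asymp\gamma\log n$ so that the exponential gain on the good $p$ outruns the growth of the exceptional family $\mathcal D$.
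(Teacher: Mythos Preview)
The paper does not prove this lemma; it is quoted from Schmidt and used as a black box. Your reduction---bounding $|\cos\pi x|\le e^{-2\|x\|^{2}}$ and reading $\sum_{j}\|\theta_{p}s^{-j}\|^{2}$ as a count of non-extremal base-$s$ digits of $r^{p}l$---is sound and is essentially how Schmidt begins as well. The gap is in your core count. The decomposition ``$r=s^{a}r_{1}$ with $\gcd(r_{1},s)=1$'' need not exist (try $r=4$, $s=6$), and the natural repair $r=r_{0}r_{1}$ with $r_{0}$ supported on the primes of $s$ does not help either, because multiplicative independence does \emph{not} force $r_{1}\ge 2$. Take $r=12$, $s=18$: these are multiplicatively independent (the exponent vectors $(2,1)$ and $(1,2)$ at the primes $2,3$ are not proportional), yet every prime of $r$ divides $s$, so $r_{1}=1$ and there is no order of $r_{1}$ modulo $s^{m}$ to invoke. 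Concretely, with $l=s^{k}$ one has $r^{p}l\equiv 0\pmod{s^{k+K}}$ as soon as $p\ge 2K$, so for all but $O(\log n)$ values of $p<n$ the digits in your fixed window $[k,k+K)$ are identically zero and every such $p$ is ``bad'' by your criterion---while the product you are trying to bound is governed by digits in a window that slides linearly with $p$ and never stabilises. Your closing remark that ``only finitely many $p$ are affected and for the rest the window stabilises'' is exactly what fails for such pairs; the residue-class mechanism then carries no information at all. Even when $\gcd(r,s)=1$ the count has a loose end: the term $s^{k+\eta K}$ in your bad-set estimate is not controlled by $n$, since nothing in the hypotheses bounds $k$, and ``handled by the trivial bound'' does not explain how a power saving survives.

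Schmidt's argument does not pass through residues modulo powers of $s$. Multiplicative independence enters only through the irrationality of $\log r/\log s$, which he exploits in the preceding Hilfss\"atze to control, uniformly in $k$ and $l$, how often the sequence $p\mapsto r^{p}l$ can land near numbers with degenerate base-$s$ structure. That device is insensitive to common prime factors of $r$ and $s$ and is the idea missing from your sketch.
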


Lemma~\ref{10}  is our  analytic tool to control discrepancy for multiplicatively independent
bases.  It originates in Schmidt's work~\cite{Sch61}.  Our proof adapts the version  given by Pollington~\cite{Pol81}.

\begin{definition}\label{2.4}
For integers $a, \ell$, sets $R, T$ and a real number $x$, set
\[
A(x,R,T,a,\ell)=\sum_{t\in T}\;\sum_{r\in R}\;
  \Bigl|\sum_{j=\base{a}{r}+1}^{\base{a+\ell}{r}} e(r^j t x)\Bigr|^2.
\]
\end{definition}

\begin{lemma}\label{10}
Let $s$ be a base greater than $2$.
If $s$ is odd, then let $U$ be $\B_s\setminus \{z\}$ for some $z$ in $B_s$ such that  $z$ even.
Else, if $s$ is even, then let $U$ be $\B_s\setminus \{z,\tilde{z}\}$ 
for some $z$ and $\tilde{z}$ in $\B_s$ 
such that $z$ is even and $\tilde{z}$ is odd.
Let $R$ be a finite set of bases multiplicatively independent to $s$, 
$T$ be a finite set of non-zero integers and $a$ be a non-negative integer.  
Let $x$ be $s$-adic with precision~$\base{a}{s}$.  

For every positive real number~$\delta$
there is a length $\ell_0$ such that for all $\ell\geq \ell_0$, 
there are at least $(1-\delta)(\card{U})^{k}$ blocks $v$ in 
$\digits{U}{k}$ for $k=\base{a+\ell}{s}-\base{a}{s}$
such that $A(x_v,R,T,a,\ell)\leq  \ell\,^{2-c(R,s)/4}$, 
for  $x_v=x+s^{-(\base{a}{s}+1)}\sum_{j=0}^{k-1} v_js^{-j}$
and 
$c(R,s)$  the minimum of the constants $c$ in Lemma~\ref{9} for pairs $r,s$ with $r\in R$.  

Furthermore, $\ell_0$ is a computable function of $s$, $U$, $R$ and $T$ and thereby does not depend on $a$ nor on $x$.
\end{lemma}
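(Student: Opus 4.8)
The plan is to bound $A(x_v,R,T,a,\ell)$ in mean over $v\in\digits{U}{k}$ and then apply Markov's inequality. Expanding the square, $A(x_v,R,T,a,\ell)$ is a double sum over $t\in T$, $r\in R$ and pairs of indices $j,j'$ in the range $(\base{a}{s},\base{a+\ell}{s}]$ of terms $e\bigl(r^j t x_v - r^{j'} t x_v\bigr)$. Since $x_v = x + s^{-(\base{a}{s}+1)}\sum_{i=0}^{k-1} v_i s^{-i}$ and $x$ is $s$-adic with precision $\base{a}{s}$, the quantity $r^j t x_v$ splits into a part depending only on $x$ (and on $j,r,t$) and a part $r^j t s^{-(\base{a}{s}+1)}\sum_i v_i s^{-i}$ that is linear in the coordinates $v_i$. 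First I would take the expectation over $v$ chosen uniformly from $\digits{U}{k}$: because the coordinates $v_0,\dots,v_{k-1}$ are independent and each uniform on $U$, the expectation of $e(r^j t x_v - r^{j'}t x_v)$ factors as a product over $i$ of one-dimensional character sums $\frac{1}{\card U}\sum_{u\in U} e\bigl(\alpha_i u\bigr)$, where $\alpha_i$ is an explicit multiple of $r^j - r^{j'}$ times $s^{-(\base{a}{s}+1+i)} t$. The diagonal terms $j=j'$ contribute exactly $\card{T}\,\card{R}\,k$, which is $O(\ell)$.

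The heart of the argument is estimating the off-diagonal terms, and this is where the special form of $U$ enters. For $j\neq j'$ one gets a product $\prod_{i=0}^{k-1}\Phi(\alpha_i)$ where $\Phi(\alpha)=\frac{1}{\card U}\sum_{u\in U}e(\alpha u)$. Here I would use that $U$ is $\B_s$ with one digit removed (when $s$ is odd) or two digits of opposite parity removed (when $s$ is even): writing $\frac{1}{s}\sum_{u\in\B_s}e(\alpha u) = e((s-1)\alpha/2)\frac{\sin(\pi s\alpha)}{s\sin(\pi\alpha)}$, which vanishes when $s\alpha\in\Z\setminus s\Z$, one checks that $\Phi(\alpha)$ is a small perturbation of that Dirichlet-type kernel and, crucially, that $|\Phi(\alpha)|$ is bounded by a constant multiple of $|\cos(\pi s \alpha)|$ plus a term controlled by the same when the two removed digits have opposite parity (the odd/even condition is exactly what makes $e((s-1)\alpha/2)$ for the removed digits combine to leave a factor comparable to $\cos(\pi s\alpha)$ rather than merely $1$). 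This reduces the product $\prod_i|\Phi(\alpha_i)|$ to something comparable to $\prod_{q}|\cos(\pi r^{j}t(r^{j'-j}-1)/s^{q})|$ over the relevant range of $q$, i.e. a tail product of cosines of the shape appearing in Lemma~\ref{9}.

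With that reduction in hand, summing the off-diagonal contribution over $j'$ (for fixed $j$) and then over $j$ in the window of length $k=\base{a+\ell}{s}-\base{a}{s}\le \base{\ell}{s}+1$ is exactly controlled by Lemma~\ref{9}: for each $r\in R$ and $t\in T$ the sum over $p$ of $\prod_q|\cos(\pi r^p l/s^q)|$ with $l\asymp t$ is at most $2(\text{window length})^{1-c(r,s)}$, so the total off-diagonal mean is $O\bigl(\card{R}\,\card{T}\,\ell^{1-c(R,s)}\bigr)$ up to the bookkeeping needed to line up $l\ge s^k$ and the precise starting index; one absorbs lower-order indices and the finitely many degenerate pairs into a computable constant. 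Combining, the expectation of $A(x_v,R,T,a,\ell)$ over $v$ is $O(\ell)$, hence certainly $\le \frac{\delta}{2}\,\ell^{2-c(R,s)/4}$ once $\ell$ is large enough (depending only on $s,U,R,T,\delta$, not on $a$ or $x$). Markov's inequality then gives that the set of $v$ with $A(x_v,R,T,a,\ell) > \ell^{2-c(R,s)/4}$ has size at most $\delta(\card U)^k$, which is the claim; the threshold $\ell_0$ is computable because the constant $c(R,s)$ is computable by Lemma~\ref{9} and every other quantity in the estimate is an explicit function of $s,U,R,T,\delta$.

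The main obstacle I expect is the second step: getting a clean bound of the form $|\Phi(\alpha)|\le C\,|\cos(\pi s\alpha)|^{?}$ (or reducing $\prod_i|\Phi(\alpha_i)|$ to a cosine tail product) uniformly over all $\alpha$, and in particular verifying that removing an even digit (resp.\ an even and an odd digit) is what makes the leftover phase factor contribute a genuine $\cos(\pi s\alpha)$-type decay rather than being $O(1)$ — this is precisely the role of the parity conditions built into $U$ in Lemma~\ref{5}, and matching Pollington's/Schmidt's kernel manipulation to the two-removed-digit case is the delicate bit. Everything after that is a routine application of Lemma~\ref{9} plus Markov.
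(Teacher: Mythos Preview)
Your overall architecture---bound the mean of $A(x_v)$ over $v\in\digits{U}{k}$, split into diagonal and off-diagonal, reduce the off-diagonal to a product of cosines and feed it into Lemma~\ref{9}, then apply Markov---is exactly what the paper does. Two points need correction.

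First, the ``delicate bit'' is in fact a one-line observation, and your target function is slightly wrong. The parity conditions on the removed digits are arranged precisely so that the elements of $U$ can be partitioned into pairs of \emph{consecutive} integers: if $s$ is odd and the even $z$ is removed, then $U=\{0,1\}\cup\{2,3\}\cup\cdots\cup\{z-2,z-1\}\cup\{z+1,z+2\}\cup\cdots\cup\{s-2,s-1\}$; if $s$ is even and an even $z<\tilde z$ with $\tilde z$ odd are removed, the same pairing works on the three intervals $[0,z)$, $(z,\tilde z)$, $(\tilde z,s)$. Each pair contributes $e(d\alpha)(1+e(\alpha))$, whence
\[
\Bigl|\sum_{d\in U} e(d\alpha)\Bigr|\ \le\ \frac{\card U}{2}\,\bigl|1+e(\alpha)\bigr|\ =\ \card U\,\bigl|\cos(\pi\alpha)\bigr|.
\]
So the correct cosine factor is $|\cos(\pi\alpha)|$, not $|\cos(\pi s\alpha)|$; no Dirichlet-kernel perturbation analysis is needed. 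With $\alpha=L r^g s^{-k}$ this yields $\prod_k|\cos(\pi L r^g s^{-k})|$, which (after truncating the tail by a constant) is exactly the product in Lemma~\ref{9}.

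Second, your accounting for the off-diagonal is off by a factor of~$\ell$. Lemma~\ref{9} bounds the sum over \emph{one} of the two indices by $O(\ell^{1-c})$; summing then over the other index contributes a further factor $O(\ell)$, so the off-diagonal mean is $O(\ell^{2-c(R,s)})$, not $O(\ell)$ (the paper also sets aside a band $|j-g|<p$ of near-diagonal terms and an edge strip, bounding those trivially by $O(\ell)$). This does not affect the conclusion: $\ell^{2-c}/\ell^{2-c/4}=\ell^{-3c/4}\to 0$, so Markov still gives at most $\delta(\card U)^k$ exceptional blocks once $\ell$ exceeds an explicit threshold depending only on $s,U,R,T,\delta$.
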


\begin{proof}
  We abbreviate $A(x,R,T,a,\ell)$ by $A(x)$, abbreviate $(a+\ell)$ by $b$ and
  $\digits{U}{\base{b}{s}-\base{a}{s}}$ by~$\L$.  To provide the needed $\ell_0$ we estimate
  the mean value of $A(x)$ on the set of numbers~$x_v$.  We need an upper bound for
\[
    \sum_{v\in \L}A(x_v)=\sum_{v\in \L}\;\sum_{t\in T}\;\sum_{r\in R}\;
    \Bigl|\sum_{j=\base{a}{r}+1}^{\base{b}{r}} e(r^j t x_v)\,\Bigr|^2
=\sum_{v\in \L}\;\sum_{t\in T}\;\sum_{r\in R}\;
    \sum_{g=\base{a}{r}+1}^{\base{b}{r}}
    \sum_{j=\base{a}{r}+1}^{\base{b}{r}}
    e((r^j-r^g)tx_v).
\]
Our main tool is Lemma~\ref{9}, but it does not apply to all the terms  $A(x_v)$ in the sum.
We will split $\sum_{v\in \L}A(x_v)$  into two smaller sums  $\sum_{v\in \L}B(x_v)$ and $\sum_{v\in \L}C(x_v)$, 
so that  a straightforward analysis applies to the first, and Lemma~\ref{9} applies to the other.
Let $p$ be the least integer satisfying $r^{p-1} \geq 2|t|$ for every $t\in T$ and 
$r^p\geq s^2+1$ for every $r\in R$.

{\everymath={\displaystyle}
\begin{align*}
  \sum_{v\in \L} B(x_v)=\sum_{v\in \L}\sum_{t\in T}\sum_{r\in R}
  \left(
    \begin{array}{ll}
\sum_{g=\base{b}{r}-p+1}^{\base{b}{r}}
\sum_{j=\base{a}{r}+1}^{\base{b}{r}}
e((r^j-r^g)t x_v)&+\\
\sum_{g=\base{a}{r}+1}^{\base{b}{r}}
\sum_{j=\base{b}{r}-p+1}^{\base{b}{r}}
e((r^j-r^g)t x_v)&+\\
\sum_{g=\base{a}{r}+1}^{\base{b}{r}}
\sum_{\substack{j=\base{a}{r}+1\\ |g-j|<p}}^{\base{b}{r}}
e((r^j-r^g)t x_v).
    \end{array}
\right)
\end{align*}
}
We obtain the following bounds.  The first inequality uses that each term in the explicit definition
of $B(x)$ has norm less than or equal to $1$.   Recall, $b=a+\ell$.
\begin{align*}
  \sum_{v\in \L}  |B(x)|&\leq \sum_{v\in \L} \sum_{t\in T} \sum_{r\in R}4p(\base{b}{r}-\base{a}{r}) \\
  &\leq \sum_{v\in \L}  {\card}T\,{\card}R\; 8p\ell \nonumber\\
  &  \leq {\card}T\,{\card}R\;  8p\ell\,(\card{U})^{\base{b}{s}-\base{a}{s}}.
\end{align*}
The other sum is as follows.
\begin{align*}
\sum_{v\in \L} C(x_v) =& \sum_{v\in \L} \sum_{t\in T}\sum_{r\in R}
\sum_{g=\base{a}{r}+1}^{\base{b}{r}-p}\;\;
\sum_{\substack{j=\base{a}{r}+1\\
    |j-g|\geq p}}^{\base{b}{r}-p}
e((r^j-r^g)t x_v)\\
=& \sum_{t\in T}\sum_{r\in R}
\sum_{g=\base{a}{r}+1}^{\base{b}{r}-p}\;\;
\sum_{\substack{j=\base{a}{r}+1\\
    |j-g|\geq p}}^{\base{b}{r}-p}
\sum_{v\in \L}  e((r^j-r^g)t x_v).
\end{align*}
For fixed $j$ and $g$, we have the following identity:
\[
\sum_{v\in \L}\;\;
e((r^j-r^g)tx_v)= e((r^j-r^g)tx) \, 
\prod_{k=\base{a}{s}+1}^{\base{b}{s}}\sum_{d\in U} e\left(\frac{d t (r^j-r^g)}{s^k}\right).
\]
\noindent Since $|\sum_{x \in X} e(x)|=|\sum_{x \in X} e(-x)|$ holds for any 
finite set $X$ of real numbers,
we can bound the sums over $g$ and $j$ as
follows: 
\begin{align*}
  \Bigl|\sum_{v\in \L} C(x_v)\Bigr|\leq&
  \sum_{t\in T}\sum_{r\in R}
\sum_{j=\base{a}{r}+1}^{\base{b}{r}-p}
\sum_{\substack{g=\base{a}{r}+1\\
    |j-g|\geq p}}^{\base{b}{r}-p }\;\;
\prod_{k=\base{a}{s}+1}^{\base{b}{s}}\;
\Bigl|\;
\sum_{d\in U} e\Bigl(\frac{d t(r^j-r^g)}{s^k}\Bigr)
\;\Bigr|\\
\leq&
2
\sum_{t\in T}\sum_{r\in R}
\sum_{j=p}^{\base{b}{r}-\base{a}{r}-p}\;\;
\sum_{g=1}^{\base{b}{r}-\base{a}{r}-p-j}\;\;
\prod_{k=\base{a}{s}+1}^{\base{b}{s}}\;
\Bigl|\;
\sum_{d\in U} e\Bigl(\frac{d tr^{\base{a}{r}}r^g(r^{j}-1)}{s^k}\Bigr)
\;\Bigr|.
\end{align*}
Now, we use the properties of $U$ to show that 
$\Bigl|\sum_{d\in U} e({dx}) \Big|\leq \frac{1}{2}\card{U} |1+e(x)|$. 
 We will also show that $\card{U}$ is even and thus $\frac{1}{2}\card{U}$ is a positive integer.   
 We consider the
odd and even cases for $s$ separately.

Suppose that $s$ is odd.  
By hypothesis $U$ is $\{0,\dots,z-1,z+1,\dots,s-1\}$ and $z$ is even.
Hence, $\card{U}=s-1$ and is even.   We parse our sum in pairs:
\begin{align*}
\sum_{d\in U} e(dx) &=
\sum_{\substack{d\in U\\ d<z}} e(dx)  +
\sum_{\substack{d\in U\\ z<d}} e(dx) \\
&=\sum_{\substack{d\in U\\ d<z\\ \text{$d$ even}}} e(dx)(1+e(x)) +
\sum_{\substack{d\in U\\ z<d\\\text{$d$ odd}}} e(dx)(1+e(x)).
\end{align*}
We conclude that $\Bigl|\sum_{d\in U} e({dx}) \Big|\leq \frac{1}{2}\card{U} |1+e(x)|$.

Suppose that $s$ is even.  Then, $z<\tilde{z}$ and 
$U$ is $\{0,\dots,z-1,z+1,\dots,\tilde{z}-1,\tilde{z}+1,\dots,s-1\}$, 
where $z$ is even and $\tilde{z}$ is odd.  
Hence, $\card{U}=s-2$ and is even.  Again, we parse our sum in pairs:
\begin{align*}
\sum_{d\in U} e(dx) =&
\sum_{\substack{d\in U\\ d<z}} e(dx)  +
\sum_{\substack{d\in U\\ z<d<\tilde{z}}} e(dx) +
\sum_{\substack{d\in U\\ \tilde{z}<d}} e(dx) \\
=&
\sum_{\substack{d\in U\\ d<z\\ \text{$d$ even}}} e(dx)(1+e(x)) +
\sum_{\substack{d\in U\\ z<d<\tilde{z}\\\text{$d$ odd}}} e(dx)(1+e(x)) + 
\sum_{\substack{d\in U\\ \tilde{z}<d\\\text{$d$ even}}} e(dx)(1+e(x)). 
\end{align*}
Again, we conclude that $\Bigl|\sum_{d\in U} e({dx}) \Big|\leq \frac{1}{2}\card{U} |1+e(x)|$.

To simplify the expressions, let $L$ denote $(r^j-1) r^{\base{a}{r}}t$.   Then,
\begin{align*}
  \Bigl|\sum_{v\in \L} C(x_v)\Bigr|
\leq&
2
\sum_{t\in T}\sum_{r\in R}
\sum_{j=p}^{\base{b}{r}-\base{a}{r}-p}\;\;
\sum_{g=1}^{\base{b}{r}-\base{a}{r}-p-j}
\prod_{k=\base{a}{s}+1}^{\base{b}{s}}\;
\Bigl|\;
\sum_{d\in U} e(d L r^gs^{-k})
\;\Bigr|\\
\leq&
2
\sum_{t\in T}\sum_{r\in R}
\sum_{j=p}^{\base{b}{r}-\base{a}{r}-p}\;\;
\sum_{g=1}^{\base{b}{r}-\base{a}{r}-p-j}
\prod_{k=\base{a}{s}+1}^{\base{b}{s}}\;
\frac{\card{U}}{2}\Bigl|\;1+ e( L r^gs^{-k})
\;\Bigr|.
\end{align*}
By the double angle identities,
\[  \Bigl|\sum_{v\in \L} C(x_v)\Bigr|
\leq
2 (\card{U})^{\base{b}{s}-\base{a}{s}}
\sum_{t\in T}\sum_{r\in R}
\sum_{j=p}^{\base{b}{r}-\base{a}{r}-p}\;\;
\sum_{g=1}^{\base{b}{r}-\base{a}{r}-p-j}
\prod_{k=\base{a}{s}+1}^{\base{b}{s}}\;
|\cos(\pi L r^g s^{-k})|. 
\]
The following upper bound on the value of $L$ for  $r$, $j$ and $t$ is ensured by the
choice of $p$.  Let $\Tmax$ be the maximum of the absolute values of the elements of $T$.
\begin{align*}
  Lr^gs^{-\base{b}{s}}&\leq   (r^j -1)r^{\base{a}{r}} t r^g s^{-\base{b}{s}}\\
  &\leq r^j r^{\base{a}{r}} t r^{\base{b}{r} - \base{a}{r} - p - j}  s^{-\base{b}{s}} =
  t r^{\base{b}{r}-p} s^{-\base{b}{s}}\\
  &\leq \Tmax \; r^{\ceil{b/\log r}} \; s^{-\ceil{b/\log s}} r^{-p}\\
  &\leq \Tmax \; r^{1-p}\\
  &\leq 1/2 \qquad\mbox{(an ensured condition on $p$)}.
\end{align*}
Using this upper bound, for every $r$, $j$ and $t$ above, $L r^g s^{-(\base{b}{s}+k)}\leq
2^{-(k+1)}$.  We conclude that 
\[
\prod_{k=\base{b}{s}+1}^\infty |\cos(\pi L r^g s^{-k})|\geq
\prod_{k=1}^\infty |\cos(\pi 2^{-(k+1)})|,
\]
where the right hand side is a positive constant. Then, for all $r$, $j$ and $t$
\begin{align*}
\prod_{k=\base{a}{s}+1}^{\base{b}{s}} |\cos(\pi L r^g s^{-k})|
&=
\prod_{k=\base{a}{s}+1}^\infty |\cos(\pi L r^g s^{-k})|
\;\;\left(\prod_{k=\base{b}{s}+1}^\infty |\cos(\pi L r^g s^{-k})|\right)^{-1}
\end{align*}
which, for an appropriate constant $\constant$, is at most
$\displaystyle{   \constant \;\prod_{k=\base{a}{s}+1}^\infty |\cos(\pi L r^g s^{-k})|}$.
\newline
Now, for $r$, $j$ and $t$, we give a lower bound on the absolute value of $L$.
\begin{align*}
|L|&\geq (r^p-1)r^{\base{a}{r}} = (r^p-1)r^{\ceil{a/\log r}}\\
    &\geq (r^p-1) s^{a/\log s}\\
    &\geq s^{2+a/\log s} \qquad\mbox{(an ensured condition on $p$)}\\
    &\geq s^{\base{a}{s} +1}.
\end{align*}
Using this lower bound, we can apply Lemma~\ref{9}. 
\begin{align*}
\sum_{g=1}^{\base{b}{r}-\base{a}{r}-p-j}
\prod_{k=\base{a}{s}+1}^{\base{b}{s}}\;
\Bigl|\;
\sum_{d\in U} e(d L r^gs^{-k})
\;\Bigr|
\leq&
\sum_{g=1}^{\base{b}{r}-\base{a}{r}-p-j}
\constant \;\prod_{k=\base{a}{s}+1}^\infty |\cos(\pi L r^g s^{-k})|\\
\leq & \qquad 2\constant({\base{b}{r}-\base{a}{r}})^{1-c(R,s)}.
\end{align*}
Then,
\begin{align*}
\Bigl|\sum_{v\in \L} C(x_v)\Bigr|\leq&
2 (\card{U})^{\base{b}{s}-\base{a}{s}} 
\sum_{t\in T}\sum_{r\in R}
\sum_{j=p}^{\base{b}{r}-\base{a}{r}-p}\;\;
2\constant ({\base{b}{r}-\base{a}{r}})^{1-c(R,s)}\\
\leq&
16\constant\; {\card} T\;{\card} R\;\ell^{2-c(R,s)}\; (\card{U})^{\base{b}{s}-\base{a}{s}}. 
\end{align*}
Combining this with the estimate for $|\sum_{v\in\L} B(x_v)|$ and using that $c(R,s)$ is less than $1$, we have
\[
\sum_{v\in\L} A(x_v)\leq (8p+ 16\constant){\card}T {\card}R\;\ell^{2-c(R,s)} (\card{U})^{\base{b}{s}-\base{a}{s}}. 
\]
Therefore, the number of $v\in\L$ such that
$A(x_v)> (8p+ 16\constant){\card}T {\card}R\;\ell^{2-c(R,s)/2}$
is at most equal to 
$\ell^{-c(R,s)/2}\; (\card{U})^{\base{b}{s}-\base{a}{s}}.$
If $\ell$ is greater than $\delta^{-2/c(R,s)}$ then
$\ell^{-c(R,s)/2}<\delta$. In this case, there are at least 
$(1-\delta) (\card{U})^{(\base{b}{s}-\base{a}{s})}$ $v\in\L$ for which
\[
  A(x_v)\leq (8p+ 16\constant){\card}T\, {\card}R \ell^{2-c(R,s)/2}. 
\]
If $\ell$ is also greater than $((8p+ 16\constant)\card{T}\card{R})^{4/c(R,s)}$, then 
there are at least 
$(1-\delta)(\card{U})^{(\base{b}{s}-\base{a}{s})}$  $v\in\L$ for which
\[
A(x_v)\leq \ell\,^{2-c(R,s)/4}.
\]
This proves the lemma for $\ell_0$ equal to the least integer greater than  
$\delta^{-2/c(R,s)}$ and greater than
$((8p+ 16\constant)\card{T}\card{R})^{4/c(R,s)}$.
\end{proof}

\subsection{An upper bound for simple discrepancy  in Cantor sets}

We apply LeVeque's Inequality, which we state for the special case of simple discrepancy of the
digits in the base $s$ expansion of a real number.

\begin{lemma}[\protect{\textbf{LeVeque's inequality}, see Theorem~2.4 on page 111 in~\cite{KuiNie74}}]\label{11}
  Let $s$ be a base, $\ell$~be a positive integer, $w$~be a block in $\digits{B_s}{\ell}$ and $x$~be
  a $s$-adic rational number with precision~$a$. 
Then, letting   $x_w=x+s^{{-a}+1}\sum_{j=0}^{\ell-1} w_js^{-j}$,
  \[
  D(w,\B_s)\leq 
  \Bigl(\frac6{\pi^2}\;\sum_{t=1}^\infty \frac1{t^2} \Bigl|\frac1\ell\; \sum_{j=a+1}^{a+\ell}
  e(t s^jx_w)\Bigr|^2\Bigr)^{\frac13}.
  \]
\end{lemma}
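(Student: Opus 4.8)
The final statement to prove is LeVeque's inequality, Lemma~\ref{11}: for a base $s$, length $\ell$, block $w \in \digits{\B_s}{\ell}$, and $s$-adic rational $x$ with precision $a$, setting $x_w = x + s^{-a+1}\sum_{j=0}^{\ell-1} w_j s^{-j}$, one has
\[
D(w,\B_s) \le \Bigl(\frac{6}{\pi^2}\sum_{t=1}^\infty \frac{1}{t^2}\Bigl|\frac1\ell\sum_{j=a+1}^{a+\ell} e(ts^j x_w)\Bigr|^2\Bigr)^{1/3}.
\]
Since the lemma is explicitly cited from Kuipers--Niederreiter (Theorem~2.4, page~111) as a known result, the appropriate ``proof'' is to show that the general LeVeque inequality specializes to this statement. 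The plan is therefore: first recall the general LeVeque inequality for the discrepancy $D_N$ of a finite sequence $y_1,\dots,y_N$ of points in $[0,1)$, namely $D_N \le \bigl(\frac{6}{\pi^2}\sum_{t=1}^\infty \frac{1}{t^2}\bigl|\frac1N\sum_{n=1}^N e(t y_n)\bigr|^2\bigr)^{1/3}$; then identify the right finite sequence so that its discrepancy dominates the simple discrepancy $D(w,\B_s)$ and its exponential sums match those appearing on the right-hand side.

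**The key identification.** The natural choice is the sequence of fractional parts $y_j = \{s^j x_w\}$ for $j = a+1,\dots,a+\ell$, so $N = \ell$. Here the exponential sums $\frac1\ell\sum_{j=a+1}^{a+\ell} e(t y_j) = \frac1\ell\sum_{j=a+1}^{a+\ell} e(ts^j x_w)$ are exactly those on the right-hand side of the claimed inequality, so the analytic side matches verbatim. It remains to check the combinatorial side: that the ordinary discrepancy $D_\ell$ of the points $\{s^{a+1}x_w\},\dots,\{s^{a+\ell}x_w\}$ is at least $D(w,\B_s)$. For this, observe that $x_w$ is $s$-adic and its base-$s$ digits in positions $a+1,\dots,a+\ell$ (counting from the point) are precisely $w_0,\dots,w_{\ell-1}$. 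Multiplying by $s^j$ and taking the fractional part shifts the expansion, so the leading digit of $\{s^j x_w\}$ is the digit of $x_w$ in position $j+1$; hence as $j$ runs over $a+1,\dots,a+\ell$, the leading digits of the points $\{s^j x_w\}$ run through $w_0,\dots,w_{\ell-1}$ exactly (with the last point possibly being $0$ if it terminates, but the count is exact since $x_w$ has precision $a+\ell$ in the relevant range). Consequently, for each digit $d \in \B_s$, the number of points $\{s^j x_w\}$ landing in the interval $[d/s,(d+1)/s)$ equals $\occ(w,d)$, so evaluating $D_\ell$ on these $s$ intervals gives $\max_d |\occ(w,d)/\ell - 1/s| = D(w,\B_s)$, whence $D(w,\B_s) \le D_\ell$.

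**Assembling and the main obstacle.** Combining the two observations: $D(w,\B_s) \le D_\ell \le \bigl(\frac{6}{\pi^2}\sum_{t\ge1}\frac1{t^2}\bigl|\frac1\ell\sum_{j=a+1}^{a+\ell} e(ts^jx_w)\bigr|^2\bigr)^{1/3}$, which is exactly the claimed bound. The only genuinely delicate point — and the step I would treat most carefully — is the bookkeeping showing that the leading base-$s$ digit of $\{s^j x_w\}$ is the $w$-digit at the right position, and in particular handling the boundary/endpoint: one must be sure the points fall in the half-open intervals $[d/s,(d+1)/s)$ consistently with the convention that discrepancy is computed against intervals $[\alpha,\beta)$, and that no off-by-one in the indices $a+1$ versus $a$ creeps in (the factor $s^{-a+1}$ in the definition of $x_w$ must be reconciled with the summation range $j = a+1,\dots,a+\ell$). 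Everything else is a direct quotation of the cited theorem. If one preferred to avoid invoking Kuipers--Niederreiter as a black box, the general LeVeque inequality itself has a short Fourier-analytic proof — expand the indicator of an interval in a Fourier series, apply Parseval, and optimize — but since the excerpt explicitly cites it, the specialization argument above is the intended and sufficient proof.
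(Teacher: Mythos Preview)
The paper does not prove Lemma~\ref{11} at all: it is stated with a citation to Kuipers--Niederreiter and used as a black box, so there is no ``paper's own proof'' to compare against. Your proposal goes further than the paper by explaining why the cited general inequality specializes to the stated form, and that explanation is the standard and correct one: apply LeVeque's inequality to the finite sequence $\{s^j x_w\}$, observe that the exponential sums coincide, and bound $D(w,\B_s)$ by the sequence's discrepancy via the fact that a point lies in $[d/s,(d+1)/s)$ exactly when its leading base-$s$ digit is $d$.

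The only residual issue is the one you already flag: the indexing. As written in the paper, $s^{-a+1}$ parses as $s^{1-a}$, which is almost certainly a typo for $s^{-(a+1)}$ (compare the analogous formulas in Lemmas~\ref{10} and~\ref{13}, where the exponent is explicitly $-(\base{a}{s}+1)$). With the intended exponent, $w_k$ sits at position $a+1+k$, and then the range $j=a+1,\dots,a+\ell$ is off by one from what would make the leading digits of $\{s^j x_w\}$ run exactly through $w_0,\dots,w_{\ell-1}$; one gets $w_1,\dots,w_{\ell-1},0$ instead. This shifts at most one count by one and changes $D(w,\B_s)$ by at most $1/\ell$, which is harmless for every application in the paper, so your derivation is substantively correct and the caveat you noted is exactly the right place to be careful.
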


\begin{lemma}\label{12}
Let $s$ be a base, $\epsilon$ be a positive  real number and 
$x$ be a $s$-adic rational number with precision $a$. 
There is a finite set $T$ of positive integers and a positive real number $\gamma$ such that, 
for every positive integer $\ell$ and every block $w$ in $\digits{B_s}{\ell}$, 
letting $x_w=x+s^{{-a}+1}\sum_{j=0}^{\ell-1} w_js^{-j}$,
  \[
\text{ if for all $t\in T$, } \frac{1}{\ell^2}\left|\sum_{j=a+1}^{a+\ell}  e(t s^jx_w)\right|^2<\gamma  
\text{ then } D(w,\B_s)<\epsilon,
   \] 
\end{lemma}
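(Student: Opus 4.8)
The statement is a routine consequence of LeVeque's inequality (Lemma~\ref{11}), which bounds $D(w,\B_s)$ by the cube root of $\frac{6}{\pi^2}\sum_{t\ge 1} t^{-2}\bigl|\frac1\ell\sum_{j=a+1}^{a+\ell} e(ts^jx_w)\bigr|^2$. The idea is to split this infinite sum at a threshold $T_0$: the tail $\sum_{t>T_0}$ is controlled by the trivial bound $\bigl|\frac1\ell\sum_j e(ts^jx_w)\bigr|\le 1$ together with the convergence of $\sum t^{-2}$, and the head $\sum_{t\le T_0}$ is made small by the hypothesis once $\gamma$ is chosen small enough.

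\begin{proof}
Let $T_0$ be a positive integer large enough that $\frac{6}{\pi^2}\sum_{t>T_0} t^{-2}<\frac{\epsilon^3}{2}$; such $T_0$ exists since $\sum_{t\ge1}t^{-2}$ converges. Set $T=\{1,2,\dots,T_0\}$ and choose a positive real number $\gamma$ small enough that $\frac{6}{\pi^2}\Bigl(\sum_{t=1}^{T_0}t^{-2}\Bigr)\gamma<\frac{\epsilon^3}{2}$, i.e.\ $\gamma<\frac{\pi^2\epsilon^3}{12\sum_{t\le T_0}t^{-2}}$.

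Now fix a positive integer $\ell$ and a block $w\in\digits{B_s}{\ell}$, and let $x_w=x+s^{-a+1}\sum_{j=0}^{\ell-1}w_js^{-j}$. Suppose that for every $t\in T$ we have $\frac1{\ell^2}\bigl|\sum_{j=a+1}^{a+\ell}e(ts^jx_w)\bigr|^2<\gamma$. We split the sum occurring in Lemma~\ref{11}:
\[
\frac{6}{\pi^2}\sum_{t=1}^\infty\frac1{t^2}\Bigl|\frac1\ell\sum_{j=a+1}^{a+\ell}e(ts^jx_w)\Bigr|^2
=\frac{6}{\pi^2}\sum_{t=1}^{T_0}\frac1{t^2}\Bigl|\frac1\ell\sum_{j=a+1}^{a+\ell}e(ts^jx_w)\Bigr|^2
+\frac{6}{\pi^2}\sum_{t>T_0}\frac1{t^2}\Bigl|\frac1\ell\sum_{j=a+1}^{a+\ell}e(ts^jx_w)\Bigr|^2.
\]
For the first sum, the hypothesis gives $\bigl|\frac1\ell\sum_{j=a+1}^{a+\ell}e(ts^jx_w)\bigr|^2<\gamma$ for each $t\le T_0$, so this sum is at most $\frac{6}{\pi^2}\gamma\sum_{t\le T_0}t^{-2}<\epsilon^3/2$. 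For the second sum, each term $\bigl|\frac1\ell\sum_{j=a+1}^{a+\ell}e(ts^jx_w)\bigr|^2$ is at most $1$ by the triangle inequality, so this sum is at most $\frac{6}{\pi^2}\sum_{t>T_0}t^{-2}<\epsilon^3/2$. Adding the two bounds, the quantity inside the cube root in Lemma~\ref{11} is strictly less than $\epsilon^3$, whence $D(w,\B_s)<\epsilon$ by Lemma~\ref{11}.
\end{proof}

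**Main obstacle.** There is essentially no obstacle: the only point requiring care is the order of quantifiers—$T$ and $\gamma$ must be produced from $\epsilon$ (and $s$) alone, uniformly in $\ell$ and $w$—but this is automatic here because the tail estimate uses only the universal bound $\bigl|\frac1\ell\sum_j e(\cdot)\bigr|\le 1$ and the convergence of $\sum t^{-2}$, neither of which depends on $\ell$ or $w$. Note that $x$ and its precision $a$ play no role beyond defining $x_w$; one could even absorb them. The choice of $T$ and $\gamma$ is explicit, so if computability is later needed it is immediate.
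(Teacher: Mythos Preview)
Your proof is correct and follows essentially the same approach as the paper: both split the LeVeque sum at a threshold, bound the tail by the trivial estimate $\bigl|\frac1\ell\sum_j e(\cdot)\bigr|\le 1$ together with the convergence of $\sum t^{-2}$, and bound the head using the hypothesis. The only cosmetic difference is that the paper fixes $\gamma=\epsilon^3/2$ and then uses $\sum_{t\ge1}t^{-2}=\pi^2/6$ to control the head, whereas you choose $\gamma$ in terms of the partial sum $\sum_{t\le T_0}t^{-2}$; both choices are explicit and yield the same conclusion.
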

\begin{proof}
Since
\[
 \Bigl|\frac1\ell\; \sum_{j=a+1}^{a+\ell}
  e(t s^jx)\Bigr|^2\leq 1,
  \]
 we get, for each integer $k$,
\begin{align*}
\sum_{t=k+1}^\infty \frac1{t^2} \Bigl|\frac1\ell\; \sum_{j=a+1}^{a+\ell}
e(t s^jx)\Bigr|^2&\leq \sum_{t=k+1}^\infty \frac1{t^2} 
\leq \int_{k+1}^\infty x^{-2} {\rm d}x
\leq \frac1{k+1}.
\end{align*}
Set $k = \ceil{12/(\epsilon^3\pi^2)}$. Assume that
\[
\frac1{\ell^2}\; \Bigl|\sum_{j=a+1}^{a+\ell} e(t s^jx)\Bigr|^2<  \frac{\epsilon^3}{2}
\] 
for all positive integers $t$ less than or equal to $k$. Then,
\begin{align*}
\sum_{t=1}^k \frac1{t^2} \Bigl|\frac1\ell\; \sum_{j=0}^{\ell-1}
e(t s^jx)\Bigr|^2
+\sum_{t=k+1}^\infty \frac1{t^2} \Bigl|\frac1\ell\; \sum_{j=0}^{\ell-1}
e(t s^jx)\Bigr|^2\leq
\sum_{t=1}^k \frac{1}{t^2} \cdot \frac{\epsilon^3}{2} 
+\frac1{k+1} 
\leq \epsilon^3 \frac{\pi^2}{12} +\frac1{k+1}.
\end{align*}
Our choice of $k$ guarantees that
$\bigl(6/\pi^2( ( \epsilon^3 \pi^2 /12) + 1/(k+1))\bigr)^{\frac13}<\epsilon$.
It then follows from Lemma~\ref{11} that $D(w,\B_s)<\epsilon$.
This proves the lemma with $T = \{1, \ldots , k\}$ and $\gamma = \epsilon^3 / 2$. 
\end{proof}

\begin{lemma}\label{13}
Let $s$ be a base greater than $2$.
If $s$ is odd, then let $U$ be 
$\B_s\setminus \{z\}$ 
for some even $z$.
Else, if $s$ is even, then let $U$ be 
$\B_s\setminus \{z,\tilde{z}\}$ 
for some even $z$ and some odd $\tilde{z}$ such that  $z<\tilde{z}$.
Let $R$ be a finite set of bases multiplicatively independent to $s$.
Let $x$ be $s$-adic with precision~$\base{a}{s}$.  

For all positive real numbers $\epsilon$ and~$\delta$ there is a length~$\ell_0$
such that for all $\ell\geq \ell_0$, there are at least
$(1-\delta)(\card{U})^{\base{a+\ell}{s}-\base{a}{s}}$ blocks
$v\in\digits{U}{\base{a+\ell}{s}-\base{a}{s}}$ such that for each $r\in R$,   
the block $u\in\digits{B_r}{\base{a+\ell}{r}-\base{a}{r}}$ 
in the expansion of   
$x+s^{-(\base{a}{s}+1)}\sum_{j=0}^{\base{a+\ell}{s}-\base{a}{s}-1   } v_js^{-j}$ 
 in base~$r$ satisfies $D(u,\B_r)<\epsilon$.

Furthermore, $\ell_0$ is a computable function of $s$,  $U$ and $R$ and thereby  does not depend on
$a$ nor on~$x$.
\end{lemma}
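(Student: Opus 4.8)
The plan is to deduce this from the exponential-sum estimate of Lemma~\ref{10} together with the LeVeque-type bound of Lemma~\ref{12}, handling one base $r\in R$ at a time.

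First I would fix the positive reals $\epsilon$ and $\delta$. For each $r\in R$, apply Lemma~\ref{12} with $r$ in place of $s$ and with this $\epsilon$ to obtain a finite set $T_r$ of positive integers and a positive real $\gamma_r$; inspection of the proof of Lemma~\ref{12} shows $T_r$ and $\gamma_r$ may be taken to depend only on $\epsilon$, so they are computable. Put $T=\bigcup_{r\in R}T_r$ (a finite set of non-zero integers) and $\gamma=\min_{r\in R}\gamma_r>0$. Apply Lemma~\ref{10} to $s$, $U$, this $T$, the given $R$, the integer $a$, the $s$-adic number $x$ and the given $\delta$: it produces a length $\ell_1$, computable from $s,U,R,T$ and independent of $a$ and $x$, such that for every $\ell\geq\ell_1$ at least $(1-\delta)(\card{U})^{k}$ blocks $v\in\digits{U}{k}$, with $k=\base{a+\ell}{s}-\base{a}{s}$, satisfy $A(x_v,R,T,a,\ell)\leq\ell^{\,2-c(R,s)/4}$, where $x_v=x+s^{-(\base{a}{s}+1)}\sum_{j=0}^{k-1}v_js^{-j}$. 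Call such a $v$ \emph{good}. Because $A(x_v,R,T,a,\ell)$ is a sum of non-negative terms, every good $v$ satisfies
\[
\Bigl|\sum_{j=\base{a}{r}+1}^{\base{a+\ell}{r}}e(r^jtx_v)\Bigr|\leq\ell^{\,1-c(R,s)/8}
\qquad\text{for every }r\in R\text{ and every }t\in T.
\]

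To invoke Lemma~\ref{12} in base $r$ I must pass from the $s$-adic number $x_v$ to an $r$-adic rational. Fix a good $v$ and a base $r\in R$, write $a_r=\base{a}{r}$, $b_r=\base{a+\ell}{r}$, $\ell_r=b_r-a_r$, and let $u$ be the length-$\ell_r$ block occupying positions $a_r+1,\dots,b_r$ of the base-$r$ expansion of $x_v$; note $\ell_r\geq\ell/\log r-1$, so $\ell_r$ is comparable to $\ell$. Let $\tilde x_v$ be the $r$-adic truncation of $x_v$ to precision $b_r$; it has the same first $b_r$ base-$r$ digits as $x_v$, in particular the same block $u$, and $|x_v-\tilde x_v|\leq r^{-b_r}$. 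A term-by-term comparison using $|e(\alpha)-e(\beta)|\leq2\pi|\alpha-\beta|$ and $\sum_{i\geq0}r^{-i}<\infty$ yields a constant $C$, depending only on $r$ and $\max_{t\in T}|t|$, with
\[
\Bigl|\sum_{j=a_r+1}^{b_r}e(r^jt\tilde x_v)\Bigr|\leq\Bigl|\sum_{j=a_r+1}^{b_r}e(r^jtx_v)\Bigr|+C\leq\ell^{\,1-c(R,s)/8}+C
\]
for all $t\in T$. Since $c(R,s)<1$, dividing by $\ell_r^{2}$ and taking $\ell$ large makes $\frac1{\ell_r^{2}}\bigl|\sum_{j=a_r+1}^{b_r}e(r^jt\tilde x_v)\bigr|^{2}<\gamma\le\gamma_r$ for all $r\in R$ and $t\in T_r$ at once; let $\ell_0\geq\ell_1$ be a threshold beyond which this holds, explicit in $c(R,s)$, $\gamma$, $C$ and the numbers $\log r$. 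Then Lemma~\ref{12}, applied in base $r$ with prefix the truncation of $x_v$ to precision $a_r$ and $u$ the appended block, gives $D(u,\B_r)<\epsilon$. As this holds for every $r\in R$ and every good $v$, the lemma follows, and every quantity entering $\ell_0$ depends only on $s,U,R,\epsilon,\delta$, not on $a$ or $x$.

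The substantive estimates are already contained in Lemmas~\ref{10} and~\ref{12}, so I do not expect a genuine obstacle; the one delicate point is the bookkeeping of the last paragraph — trading the $s$-adic $x_v$ controlled by Lemma~\ref{10} for the $r$-adic rational required by Lemma~\ref{12} at the cost of only an additive constant in the exponential sums, and checking that the base-$r$ block length $\ell_r$ grows linearly in $\ell$, so that the bound $\ell^{2-c(R,s)/4}$ of Lemma~\ref{10}, divided by $\ell_r^{2}$, still tends to $0$.
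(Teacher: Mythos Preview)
Your proof is correct and follows essentially the same route as the paper: obtain $T$ and $\gamma$ from Lemma~\ref{12} applied to each $r\in R$, invoke Lemma~\ref{10} with this $T$, and take $\ell$ large enough that the bound $\ell^{2-c(R,s)/4}$, divided by the square of the base-$r$ block length, falls below~$\gamma$. The one difference is your truncation from $x_v$ to the $r$-adic $\tilde x_v$, which is a harmless extra precaution the paper skips --- it applies Lemma~\ref{12} directly to $x_v$, since the LeVeque inequality underlying Lemma~\ref{12} holds for arbitrary reals, not only $r$-adic ones.
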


\begin{proof}
Assume  given $s, U, R, x$ and $a$   as in the hypothesis.
Fix $\epsilon$ and~$\delta$  positive real numbers greater than $0$. 
For each  base in $r$ in $R$ consider  Lemma~\ref{12} with input values the base $r$ and the fixed $\epsilon$.  
Fix a finite set of positive integers $T_\epsilon$ and a positive real number $\gamma_\epsilon$ 
that satisfies the conclusion of Lemma~\ref{12} simultaneously for all  bases $r$ in $R$  and  the fixed  $\epsilon$.

Apply Lemma~\ref{10} with input values  $s, U, R, T_\epsilon, x$ and $a$. 
Then there is  an $\ell_0$ such that for all $\ell\geq \ell_0$, there are at least
  $(1-\delta)(\card{U})^{k}$ blocks $v\in\digits{U}{k}$ 
such that  $A(x_v,R,T_\epsilon,a,\ell)\leq \ell\,^{2-c(R,s)/4}$,
where 
$k=\base{a+\ell}{s}-\base{a}{s}$,
$x_v=x+s^{-(\base{a}{s}+1)}\sum_{j=0}^{k-1} v_js^{-j}$ and
$c(R,s)$ is the minimum of the constants $c$ in Lemma~\ref{9} for pairs $r,s$ with $r\in R$.  

Fix $\ell$ be such that $\ell\geq\ell_0$
  and $\ell^{-c(R,s)/4}<\gamma_\epsilon$.  By definition, 
\[
A(x_v,R,T_\epsilon,a,\ell)=
\sum_{t\in  T_\epsilon}\;\sum_{r\in R}\; \Bigl|
\sum_{j=\base{a}{r}+1}^{\base{a+\ell}{r}} e(r^j t  x_v)\Bigr|^2.
\]  
Hence, for each $t\in T_\epsilon$ and for each $r\in R$, 
\[
\frac{1}{\ell^2}\left|\sum_{j=\base{a}{r}}^{\base{a+\ell}{r}} e(r^j t x_v)\right|^2<\gamma_\epsilon.  
\]
Then, by Lemma~\ref{12}, for each $r\in R$, $D(u,\B_r)<\epsilon$,
  where $u$ is the block of digits from position $\base{a}{r}+1$ to position $\base{a+\ell}{r}$ in
  the  expansion of $x_v$ in base $r$.
\end{proof}

\begin{lemma}\label{14}
Let $\epsilon$ be a positive real number,
 $s$ and $r$ be bases and $a$ and $b$ be positive integers such that $a<b$.  
Let $q$ be an $s$-adic rational  number with precision $\base{b}{s}$ and 
$x$ be a real number in the interval $[q,  q+s^{-\base{b}{s}})$.  
Let $u$ and $v$ in $\digits{B_r}{\base{b}{r} - \base{a}{r} +1}$ be, respectively,
the blocks in the expansions of $q$ and $x$ in base $r$ between the positions $\base{a}{r}$ and $\base{b}{r}$.  
Let $p$ be a positive  integer and  let $\tilde{u}$ in $\digits{B_{r^p}}{\base{b}{r^p}-\base{a}{r^p} +1}$
be the block in the expansion of $q$ in base $r^p$ between the  positions $\base{a}{r^p}$ and $\base{b}{r^p}$.

  If $D(u,\B_r)$, $D(\tilde{u},\B_{r^p})$, $2/r^p$ and $3p/|u|$ are all less than~$\epsilon$, then
  $D(v,\B_r)<5\epsilon$.
\end{lemma}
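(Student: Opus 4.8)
The plan is to compare the block $v$ coming from $x$ with the block $u$ coming from $q$ directly, using the fact that $x$ and $q$ differ by less than $s^{-\base{b}{s}}$, which is small relative to $r^{-\base{a}{r}}$ once $p$ and the position ranges are taken into account. First I would observe that the base-$r$ expansions of $q$ and $x$ agree on all digits at positions up through roughly $\base{b}{r}$, except possibly on the last $O(p)$ of them: indeed $|x-q|<s^{-\base{b}{s}}\leq r^{-(\base{b}{r}-p)}$ by the defining inequality $\base{b}{s}\log s\geq b$ and a comparison of $\base{b}{r}$ with $b/\log r$, together with the choice of $p$. Consequently $u$ and $v$ agree outside a terminal segment of length at most $O(p)$, so for every digit $d\in\B_r$, $|\occ(v,d)-\occ(u,d)|\leq O(p)$, hence $|D(v,\B_r)-D(u,\B_r)|\leq O(p/|u|)$. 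Since both $D(u,\B_r)<\epsilon$ and $3p/|u|<\epsilon$, this already gives $D(v,\B_r)<C\epsilon$ for a small absolute constant $C$.

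The role of the hypotheses on $\tilde u$ and on $2/r^p$ is more subtle; they are needed because the above naive argument only controls $D(v,\B_r)$ up to an error governed by how many base-$r$ digits can change when passing from $q$ to $x$, and in the worst case a carry propagating through a long run of digits equal to $r-1$ could alter many positions. So the key step is: the number of base-$r$ positions in the relevant window at which $q$ has digit $r-1$ (equivalently, the worst-case carry length) is itself controlled by $D(\tilde u,\B_{r^p})$ and $2/r^p$. More precisely, grouping the base-$r$ digits of $q$ in blocks of length $p$ gives the base-$r^p$ block $\tilde u$; a carry from $x-q$ can only propagate past a length-$p$ block of $q$ if that block consists entirely of the digit $r-1$ in base $r$, i.e.\ equals the single base-$r^p$ digit $r^p-1$. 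The number of occurrences of the digit $r^p-1$ in $\tilde u$ is at most $(\tfrac1{r^p}+D(\tilde u,\B_{r^p}))|\tilde u|\leq (\epsilon+\tfrac{1}{r^p})|\tilde u|$, and since $2/r^p<\epsilon$ this is at most $\tfrac32\epsilon|\tilde u|$; translating back, the carry can traverse at most about $\tfrac32\epsilon|\tilde u|$ full length-$p$ blocks plus one partial block, so at most $O(\epsilon|u|+p)$ base-$r$ positions of $v$ differ from those of $u$ on that account.

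Putting the two estimates together: $v$ differs from $u$ on at most $O(p)$ terminal positions (from truncation/the tail of $x$) plus $O(\epsilon|u|+p)$ positions affected by carry propagation, for a total of at most $O(\epsilon|u|+p)$ positions. Dividing by $|u|=|v|$ and using $3p/|u|<\epsilon$ bounds the change in each normalized digit count by $O(\epsilon)$, whence $D(v,\B_r)\leq D(u,\B_r)+O(\epsilon)$, and tracking constants carefully yields the claimed bound $D(v,\B_r)<5\epsilon$. The main obstacle is the bookkeeping in the second step: one must be careful that "a carry propagates past a length-$p$ block only if that block is all-$(r-1)$" is stated correctly relative to the chosen alignment of the windows (the positions $\base{a}{r^p}$ versus $\base{a}{r}$, etc.), and that the off-by-$p$ boundary terms at both ends of the window are absorbed into the $3p/|u|<\epsilon$ hypothesis; once the alignment is pinned down, the inequalities are elementary.
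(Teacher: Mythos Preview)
Your proposal is correct and follows essentially the same approach as the paper: both bound the number of base-$r$ positions at which $u$ and $v$ can differ by observing that a carry induced by the perturbation $x-q<s^{-\base{b}{s}}\le (r^p)^{-\base{b}{r^p}+1}$ can propagate past a length-$p$ block only if that block is entirely $r-1$'s (equivalently, a base-$r^p$ digit equal to $r^p-1$), whose count is at most $(1/r^p+\epsilon)|\tilde u|$ by $D(\tilde u,\B_{r^p})<\epsilon$, and then absorb the $O(p)$ boundary terms via $3p/|u|<\epsilon$. Your first paragraph is, as you yourself note, only a naive heuristic (without the carry analysis the ``agree outside $O(p)$ positions'' claim is false), but once you fold in the carry bound from your second paragraph the combined estimate and the constant-tracking match the paper's computation $\occ(v,d)/|v|\le 1/r+5\epsilon$.
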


\begin{proof}
Let $\tilde{v}$ in $\digits{B_{r^p}}{\base{b}{r^p}-\base{a}{r^p} +1}$
be the block in the expansion of $x$ in base $r^p$ between the  positions $\base{a}{r^p}$ and $\base{b}{r^p}$.
Since $0\leq x-q<s^{-\base{b}{s}}$, then $0\leq x-q\leq (r^p)^{-\base{b}{r^p}+1}$.  
Any difference between $\tilde{u}$ and $\tilde{v}$ other than in the last two  digits must come from a block of instances of the digit $r^p-1$ in the expansion of $q$ in base $r^p$  at positions preceding its last two.  
 
Since $D(\tilde{u},\B_{r^p})<\epsilon$, at most $(1/r^p+\epsilon)|\tilde{u}|$  
digits in $\tilde{u}$ can be equal to $r^p-1$.  So, $\tilde{u}$ and $\tilde{v}$ agree on all but the last
 $(1/r^p+\epsilon)|\tilde{u}|+2$ digits.  
But then $u$ and $v$ agree on all but the last $(1/r^p +\epsilon )p|\tilde{u}| +3p$ digits. 
 Then, for any $d$ in base $r$, the quantity $|\occ(u,d)-\occ(v,d)|$ is less than
  or equal to $(1/r^p +\epsilon)p|\tilde{u}|+3p$.  Thus,
  \begin{align*}
  \occ(v,d) / |v|\leq& \occ(u,d)/|u|+((1/r^p +\epsilon)p|\tilde{u}|+3p)/|v|\\
  \leq& (1/r+\epsilon)+((1/r^p +\epsilon)p|\tilde{u}|+3p)/|v|\\
  \leq& (1/r+\epsilon)+2(1/r^p +\epsilon)+3p/|v|\\
  \leq& 1/r + 5\epsilon, \text{\ provided $2/r^p$ and $3p/|v|$ are each less than $\epsilon$.}
\end{align*}
The lemma follows.
\end{proof}

In the next two lemmas, we denote by $\measure{I}$ the length 
of a real interval $I$.

\begin{lemma}\label{15}
  For any real interval $I$ and base $s$, there is a $s$-adic subinterval $I_s$ such that
  $\measure{I_s}\geq \measure{I}/(2s)$.
\end{lemma}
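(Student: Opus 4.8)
The plan is to exhibit an explicit $s$-adic subinterval of $I$ whose length is at least $\measure{I}/(2s)$, by locating a point of an appropriate $s$-adic grid inside $I$. Write $I = (\alpha, \beta)$ (endpoints handled the same way if closed), set $L = \measure{I} = \beta - \alpha$, and choose the integer $a$ to be the least integer with $s^{-a} \le L/2$, equivalently $a = \bigceil{\log_s(2/L)}$. With this choice we have $s^{-a} \le L/2$ but $s^{-(a-1)} > L/2$, i.e. $s^{-a} > L/(2s)$. The grid of $s$-adic rationals with precision $a$ partitions the line into half-open intervals $[k s^{-a}, (k+1)s^{-a})$ of length $s^{-a}$.

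First I would show that $I$ contains a full grid cell of this form. Since $L \ge 2 s^{-a}$, the interval $(\alpha, \beta)$ has length at least twice the grid spacing, so there must be an integer $k$ with $\alpha \le k s^{-a}$ and $(k+1) s^{-a} \le \beta$: take $k = \bigceil{\alpha s^{a}}$, so $k s^{-a} \ge \alpha$ and $k s^{-a} < \alpha + s^{-a} \le \alpha + L/2$, hence $(k+1) s^{-a} < \alpha + L/2 + s^{-a} \le \alpha + L = \beta$. Then $I_s = [k s^{-a}, (k+1)s^{-a})$ (or its interior, to stay inside the open interval if needed, which changes nothing about its length) is an $s$-adic interval contained in $I$ with $\measure{I_s} = s^{-a}$.

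It remains to bound $s^{-a}$ from below. By minimality of $a$ we have $s^{-(a-1)} > L/2$, so $s^{-a} > L/(2s) = \measure{I}/(2s)$, which is what we want. The only mildly delicate point is the boundary bookkeeping — ensuring the chosen grid cell lies strictly inside $I$ when $I$ is open, and checking that the edge case $L/2 = s^{-a}$ for some $a$ (so $a$ is exactly $\log_s(2/L)$) still leaves room for a full cell; both are resolved by the strict inequality $s^{-a} > L/(2s)$ and the computation above showing $(k+1)s^{-a} < \beta$ strictly. I do not expect any genuine obstacle here: the lemma is a routine pigeonhole/grid argument, and the main care is simply choosing $a$ so that the grid is fine enough to fit inside $I$ yet coarse enough that a single cell already has length $\ge \measure{I}/(2s)$.
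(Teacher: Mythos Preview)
Your argument is correct and follows the same grid/pigeonhole idea as the paper, with one small but pleasant difference: the paper chooses the least $m$ with $s^{-m} < \measure{I}$ and then splits into two cases (either a full cell of that grid already fits inside $I$, or one must descend one more level), whereas you choose from the outset the least $a$ with $s^{-a} \le \measure{I}/2$, so that a full grid cell is guaranteed to fit and no case split is needed. The lower bound $s^{-a} > \measure{I}/(2s)$ then follows immediately from minimality, just as the paper obtains it in its second case. The only cosmetic point to watch is the endpoint issue when $I$ is open and $\alpha s^a$ happens to be an integer, but in the paper's applications $I$ is always a half-open $s$-adic interval, so this does not arise.
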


\begin{proof}
  Let $m$ be least such that $1/s^m<\measure{I}$.
  Note that $1/{s^m}\geq \measure{I}/ {s}$, since $1/s^{m-1}\geq\measure{I}$.  
  If there is a $s$-adic interval of length $1/ {s^m}$ strictly contained in~$I$, 
  then let $I_s$ be such an interval, and note that $I_s$ has
  length greater than or equal to $\measure{I}/{s}$.  
  Otherwise, there must be a non-negative integer $a$ such that
  $a/s^m$ is in $I$ and neither $(a-1)/s^m$ nor $(a+1)/s^m$ belongs to~$I$.  
  Thus, $2/s^m>\measure{I}$.  
  However, since $1/s^m < \measure{I}$ and $s\geq 2$ then 
  $2/s^{m+1}<\measure{I}$.  
   So, at least one of the two intervals
  $\displaystyle{\left[\frac{sa-1}{s^{m+1}}, \frac{sa}{s^{m+1} }\right)}$ or
  $\displaystyle{\left[\frac{sa}{s^{m+1} }, \frac{sa+1}{s^{m+1} }\right)}$ must be contained in~$I$.
Denote by $I_s$ one with this property. 
  Then, $\measure{I_s}$ is $\displaystyle{\frac{1}{s^{m+1}}=\frac{1}{2s}\frac{2}{s^m}\ >\  \measure{I}/(2s).}$ 
  In either case, the length of~$I_s$ is greater than or equal to~$\measure{I}/(2s)$.
\end{proof}

\begin{lemma}\label{16}
  Let $s$ and $t$ be bases and let $I$ be an $s$-adic interval of length
 $s^{-\<b;s\>}$.  For $a=b+\ceil{\log s + 3\log t}$, there is an $t$-adic subinterval of
 $I$ of length $t^{-\<a;t\>}$.  
 \end{lemma}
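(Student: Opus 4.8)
The plan is to count how many disjoint $t$-adic intervals of length $t^{-\<a;t\>}$ fit inside $I$ and show the count is at least one, in fact at least two, so that even allowing for the two ``boundary'' $t$-adic intervals that may straddle the endpoints of $I$, at least one lands strictly inside. The length of $I$ is $s^{-\<b;s\>}=s^{-\ceil{b/\log s}}$, so $\log(1/\measure{I}) = \ceil{b/\log s}\cdot \log s$, which lies between $b$ and $b+\log s$; hence $\measure{I}\geq e^{-b-\log s}=e^{-b}/s$. Similarly the target length is $t^{-\<a;t\>}=t^{-\ceil{a/\log t}}$, and $\log(1/t^{-\<a;t\>})=\ceil{a/\log t}\cdot\log t$ lies between $a$ and $a+\log t$, so $t^{-\<a;t\>}\leq e^{-a}$.

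The key computation is then the ratio $\measure{I}/t^{-\<a;t\>}\geq (e^{-b}/s)\cdot e^{a} = e^{a-b}/s$. With $a = b+\ceil{\log s+3\log t}$ we have $a-b\geq \log s + 3\log t$, so this ratio is at least $e^{\log s+3\log t}/s = t^3 \geq 8$ (since $t\geq 2$). Thus $I$ contains a half-open interval whose length is at least $8$ times the common length of the $t$-adic intervals of precision $\<a;t\>$; partitioning $\R$ into the $t$-adic intervals of that precision, $I$ meets a run of consecutive such intervals whose total length is $\measure{I}$, so it must fully contain at least $8-2=6\geq 1$ of them (discarding at most the first and last, which may only partially overlap $I$). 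Any one of these contained intervals is the desired $t$-adic subinterval of length $t^{-\<a;t\>}$.

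There is essentially no obstacle here; the only thing to be careful about is the direction of the two ceiling estimates — one must use the upper bound $\ceil{b/\log s}\cdot\log s \leq b+\log s$ for the interval being subdivided (to get a lower bound on $\measure{I}$) and the upper bound $t^{-\<a;t\>}\leq e^{-a}$ for the small interval — and to confirm $t^3\geq 3$ leaves enough room to absorb the two truncated boundary intervals. This is immediate since $t\geq 2$ gives $t^3\geq 8$. (In fact $t^3 > 2$ already suffices for the weaker conclusion of mere existence, by the same argument used in the proof of Lemma~\ref{15}: if no $t$-adic interval of the target precision sits strictly inside $I$, then $2\,t^{-\<a;t\>}>\measure{I}$, contradicting $\measure{I}\geq t^3\, t^{-\<a;t\>} > 2\,t^{-\<a;t\>}$.)
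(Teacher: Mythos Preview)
Your proof is correct and follows essentially the same idea as the paper's: show that the target $t$-adic length $t^{-\<a;t\>}$ is small enough compared to $\measure{I}$ that at least one $t$-adic interval of that precision must sit inside $I$. The paper routes the argument through Lemma~\ref{15} (extracting a $t$-adic subinterval of length $t^{-(\ceil{-\log_t \measure{I}}+1)}$ and then checking $\<a;t\>$ exceeds that exponent), whereas you convert both lengths to the common scale $e^{-(\cdot)}$ and compare directly, obtaining the ratio $\measure{I}/t^{-\<a;t\>}\geq t^3\geq 8$ and finishing with a pigeonhole count. Your version is a bit more self-contained and makes the role of the ``$3\log t$'' term transparent, while the paper's version keeps everything in the $\<\cdot;\cdot\>$ notation used throughout the construction; the underlying estimate is the same.
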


\begin{proof}
By the proof of Lemma~\ref{15}, there is an $t$-adic subinterval of $I$ of length
$t^{-(\ceil{-\log_{t}(\mu(I))}+1)}$:
\begin{align*}
  \ceil{-\log_{t}(\measure{I})}+1&= \ceil{-\log_{t}(s^{-\base{b}{s}})}+1\\
  &=\ceil{
    {\base{b}{s}\log s }/{\log t}
      }+1\\
  &\leq \ceil{ b/\log t+ \log s/\log t}+1\\
  &\leq \base{b}{t} +\ceil{\log s/\log t} +1.  
\end{align*}
Thus, there is an $t$-adic subinterval of $I$ of length 
$t^{-(\base{b}{t} +\ceil{\log s/\log t} +1)}$.  Consider $a=b+\ceil{\log s + 3\log t}$,
\begin{align*}
  \base{a}{t} &= \ceil{a/\log t} \\
  &= \ceil{{b+\ceil{\log s + 3\log t}}/{\log t}}\\
  &\geq b/\log t +(\log s+3\log t)/\log t\\
  &\geq \base{b}{t} +\ceil{\log s/\log t} +1.
\end{align*}
This inequality is sufficient to prove the lemma.
\end{proof}

The next remark follows from direct substitution in Lemma \ref{16} above.

\begin{remark}\label{17}
   Let $r$, $s$ and $t$ be bases.  Let $b$ be a positive integer and let
   $a=b+\ceil{\log s+ 3\log t}$.  Then,
\[
   \base{a}{r}-\base{b}{r}\leq \ceil{\log s + 3\log t}/\log r+1
   \leq 2\ceil{\log s + 3\log t}.
\]
\end{remark}

\subsection{Simple discrepancy  and concatenation}

We record the next three observations without proof.

\begin{lemma}\label{18}
  Let $\epsilon$ be a positive real, $r$ be a base, $\ell$ a positive integer and 
$w\in\digits{B_r}{\ell}$ such that   $D(w,\B_r)<\epsilon$.  
Then, for any positive integer $k$ with $k<\epsilon \ell$ 
  and any $u\in\digits{B_r}{k}$, we have $D(wu,\B_r)<2\epsilon$.
\end{lemma}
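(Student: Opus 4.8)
The plan is to estimate $D(wu,\B_r)$ directly from $D(w,\B_r)$, handling each digit $d\in\B_r$ separately. First I would record the two elementary facts about concatenation, namely $\occ(wu,d)=\occ(w,d)+\occ(u,d)$ and $|wu|=\ell+k$, together with $\card{\B_r}=r$. The hypothesis $D(w,\B_r)<\epsilon$ says that $\occ(w,d)=\ell/r+\theta_d\ell$ for some $\theta_d$ with $|\theta_d|<\epsilon$, while the appended short block contributes only the trivial amount $0\le\occ(u,d)\le k$.

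Next I would substitute these into the deviation to be bounded:
\[
\frac{\occ(wu,d)}{\ell+k}-\frac{1}{r}
=\frac{\theta_d\ell+\bigl(\occ(u,d)-k/r\bigr)}{\ell+k}.
\]
Since $r\ge 2$, we have $\occ(u,d)-k/r\in[-k/2,\,k)$, so $\bigl|\occ(u,d)-k/r\bigr|<k$; hence the numerator has absolute value at most $|\theta_d|\ell+k<\epsilon\ell+k$. The hypothesis $k<\epsilon\ell$ then forces this to be less than $2\epsilon\ell$, whereas the denominator $\ell+k$ is at least $\ell$. Therefore $\bigl|\occ(wu,d)/|wu|-1/r\bigr|<2\epsilon$, and since $d$ was arbitrary this is exactly the assertion $D(wu,\B_r)<2\epsilon$.

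There is no real obstacle; the only point needing attention is the bookkeeping of the two sources of error — the discrepancy $\theta_d\ell$ inherited from $w$ and the at most $k$ extra occurrences coming from $u$ — so that their combined contribution stays strictly below $2\epsilon\ell$. This is precisely why the hypothesis is stated with the strict inequality $k<\epsilon\ell$. The same estimate bounds the positive and the negative deviation at once, so the two sides of the absolute value need not be treated separately.
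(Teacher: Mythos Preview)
Your argument is correct and is exactly the direct estimate one would expect: split the deviation for $wu$ into the inherited error $\theta_d\ell$ from $w$ and the crude bound $|\occ(u,d)-k/r|<k$ from the short tail, then use $k<\epsilon\ell$ and $\ell+k>\ell$. The paper itself records Lemma~\ref{18} ``without proof'' as an elementary observation, so there is nothing to compare against; your write-up is precisely the routine verification the authors omitted.
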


\begin{lemma}\label{19}
Let $\epsilon$ be a positive real, $r$ be a base, 
and  $(w_j)_{j \ge 0}$ be an infinite sequence of elements from $\{0, 1, \ldots , r-1\}$.
Let $(b_t)_{t \ge 0}$ be an increasing sequence of positive integers.  
Suppose that there is an integer $t_0$ such that,
for all $t>t_0$, we have
$b_{t+1}-b_t\leq \epsilon b_t$ and $D((w_{b_t+1}, \ldots , w_{b_{t+1}}),\B_r)<\epsilon$.
Then, 
  \[
  \lim_{k\to\infty}D((w_0, \ldots , w_k), \B_r) \leq 2\epsilon.
  \]
\end{lemma}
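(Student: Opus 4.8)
The plan is to run a two-scale argument: first pin down the simple discrepancy of the prefixes ending exactly at the cut points $b_t$, where the decomposition into ``good'' blocks is clean and loses nothing, and then pass from these to arbitrary prefixes using the growth hypothesis together with Lemma~\ref{18}.

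\textbf{Step 1 (the sparse subsequence $k=b_t$).} Since $(b_t)_{t\ge 0}$ is a strictly increasing sequence of positive integers, $b_t\to\infty$. Fix $t>t_0+1$ and write $(w_0,\dots,w_{b_t})$ as the concatenation of the fixed initial segment $(w_0,\dots,w_{b_{t_0+1}})$, whose length $C:=b_{t_0+1}+1$ does not depend on $t$, followed by the blocks $B_j:=(w_{b_j+1},\dots,w_{b_{j+1}})$ for $t_0<j<t$. By hypothesis each $B_j$ satisfies $D(B_j,\B_r)<\epsilon$, and their lengths sum to $b_t-b_{t_0+1}$. For a digit $d\in\B_r$, summing occurrences over the pieces yields
\[
(1/r-\epsilon)(b_t-b_{t_0+1})\ \le\ \occ((w_0,\dots,w_{b_t}),d)\ \le\ C+(1/r+\epsilon)(b_t-b_{t_0+1}),
\]
the lower bound being trivially $0$ if $\epsilon>1/r$. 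Dividing by $b_t+1$ and letting $t\to\infty$, the term $C/(b_t+1)$ vanishes and $(b_t-b_{t_0+1})/(b_t+1)\to 1$, so $\occ((w_0,\dots,w_{b_t}),d)/(b_t+1)$ is eventually within $\epsilon$ of $1/r$; taking the maximum over $d$ gives $\limsup_{t\to\infty}D((w_0,\dots,w_{b_t}),\B_r)\le\epsilon$.

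\textbf{Step 2 (arbitrary prefixes).} For large $k$ let $t=t(k)$ be the unique index with $b_t\le k<b_{t+1}$; then $t(k)\to\infty$ as $k\to\infty$. The tail $(w_{b_t+1},\dots,w_k)$ has length $k-b_t<b_{t+1}-b_t\le\epsilon b_t$, where the last inequality is the growth hypothesis (valid since $t>t_0$). Fix $\eta>0$. By Step~1 there is $T$ with $D((w_0,\dots,w_{b_t}),\B_r)<\epsilon+\eta$ for all $t\ge T$, and the tail length is less than $\epsilon b_t<(\epsilon+\eta)(b_t+1)$, i.e.\ less than $(\epsilon+\eta)$ times the length of $(w_0,\dots,w_{b_t})$. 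Lemma~\ref{18} then gives $D((w_0,\dots,w_k),\B_r)<2(\epsilon+\eta)$ for every $k$ with $t(k)\ge T$, hence for all sufficiently large $k$; as $\eta>0$ was arbitrary, $\limsup_{k\to\infty}D((w_0,\dots,w_k),\B_r)\le 2\epsilon$, which is the asserted bound (the ``$\lim$'' in the statement is read as a limit superior, and since $D$ is nonnegative this is all the applications use).

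\textbf{Where the difficulty lies.} There is no deep obstacle; the only point to get right is to avoid compounding the loss. A one-scale count that decomposes a general prefix $(w_0,\dots,w_k)$ directly into initial segment, good blocks, and tail leaks a factor $1/(1+\epsilon)$ through the inequality $k<b_{t+1}\le(1+\epsilon)b_t$, and one then has to verify by hand that $1/r-(1/r-\epsilon)/(1+\epsilon)\le 2\epsilon$ using $r\ge 2$. Isolating the clean subsequence $k=b_t$ in Step~1 and deferring the tail to a single application of Lemma~\ref{18} sidesteps this bookkeeping; the only care then required is the slack $\eta$, introduced so that the strict inequality demanded by Lemma~\ref{18} is met by the merely asymptotic bound coming out of Step~1.
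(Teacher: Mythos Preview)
Your argument is correct. The paper explicitly states ``We record the next three observations without proof'' just before Lemmas~\ref{18}--\ref{20}, so there is no proof in the paper to compare your approach against; your two-scale argument (clean control at the cut points $b_t$, then a single call to Lemma~\ref{18} for the tail) is exactly the kind of routine verification the authors had in mind, and your remark that the ``$\lim$'' is to be read as a $\limsup$ is the right reading.
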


\begin{lemma}\label{20}
Let $\epsilon$ be a positive real, 
$r$ be a base, 
$d$ be a digit in base $r$ and 
$(w_j)_{j \ge 0}$ be an infinite sequence of elements from $\{0, 1, \ldots , r-1\}$.
Let $(b_t)_{t \ge 0}$ be an increasing sequence of positive integers. 
Suppose that there is an integer $t_0$ such that, for all $t > t_0$, we have
 \[
 \frac{\occ((w_{b_t+1}, \ldots , w_{b_{t+1}}),d)}{b_{t+1}-b_t}< \frac{1}{r}-\epsilon.
 \]  
  Then,
  \[
  \liminf_{t\to\infty}\frac{\occ((w_0, \ldots , w_{b_t}),d)}{b_t+1}< \frac{1}{r} - \frac{\epsilon}{2}.
  \]  
\end{lemma}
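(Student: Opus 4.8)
The plan is to exploit the hypothesis on consecutive blocks to bound the count of the digit $d$ along the subsequence of indices $b_t$, and then argue that the ``early'' part of the sequence (the prefix up to $b_{t_0}$) contributes only an asymptotically negligible amount to the frequency. First I would fix $t_0$ as in the hypothesis, and for $t > t_0$ write the prefix $(w_0,\dots,w_{b_t})$ as the concatenation of the fixed initial segment $(w_0,\dots,w_{b_{t_0}})$ together with the blocks $(w_{b_j+1},\dots,w_{b_{j+1}})$ for $t_0 \le j < t$. Summing the occurrence counts over these pieces gives
\[
\occ((w_0,\dots,w_{b_t}),d) \le \occ((w_0,\dots,w_{b_{t_0}}),d) + \sum_{j=t_0}^{t-1}\left(\frac1r-\epsilon\right)(b_{j+1}-b_j),
\]
where the sum telescopes to $\left(\tfrac1r-\epsilon\right)(b_t - b_{t_0})$. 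Denoting the constant $\occ((w_0,\dots,w_{b_{t_0}}),d)$ by $C$, we get $\occ((w_0,\dots,w_{b_t}),d) \le C + \left(\tfrac1r-\epsilon\right)(b_t-b_{t_0}) \le C + \left(\tfrac1r-\epsilon\right)b_t$.

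Next I would divide by $b_t + 1$ to obtain
\[
\frac{\occ((w_0,\dots,w_{b_t}),d)}{b_t+1} \le \frac{C}{b_t+1} + \left(\frac1r-\epsilon\right)\frac{b_t}{b_t+1} \le \frac{C}{b_t+1} + \frac1r-\epsilon.
\]
Since $(b_t)_{t\ge 0}$ is an increasing sequence of positive integers, $b_t \to \infty$, hence $C/(b_t+1) \to 0$ as $t\to\infty$. Therefore the limit superior (and in particular the limit inferior) of the left-hand side is at most $\tfrac1r - \epsilon$, which is strictly less than $\tfrac1r - \tfrac{\epsilon}{2}$. Thus $\liminf_{t\to\infty}\frac{\occ((w_0,\dots,w_{b_t}),d)}{b_t+1} < \frac1r - \frac{\epsilon}{2}$, as claimed.

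There is essentially no obstacle here: the lemma is a routine telescoping-plus-negligible-prefix argument, and the only mild subtlety is making sure the finitely many terms before $t_0$ are absorbed into a constant that vanishes after normalization. The gap between the bound $\tfrac1r-\epsilon$ actually obtained and the stated $\tfrac1r-\tfrac{\epsilon}{2}$ is comfortable slack, so one does not even need to track the $\tfrac{b_t}{b_t+1}$ correction carefully; it only helps. (The reason the weaker-looking constant $\tfrac{\epsilon}{2}$ is stated rather than $\epsilon$ is presumably for uniformity with the form in which this lemma will be invoked later, where a strict inequality with room to spare is convenient.)
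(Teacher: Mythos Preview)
Your argument is correct in spirit and the paper itself omits the proof entirely, introducing Lemmas~\ref{18}--\ref{20} with the line ``We record the next three observations without proof.'' So there is nothing to compare against; your telescoping-plus-negligible-prefix argument is exactly the kind of routine verification the authors had in mind.

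One tiny indexing slip: the hypothesis gives the bound on the block $(w_{b_t+1},\dots,w_{b_{t+1}})$ only for $t>t_0$, so in your decomposition the sum over blocks should start at $j=t_0+1$ rather than $j=t_0$ (equivalently, take the ``fixed initial segment'' to run up to $b_{t_0+1}$). This changes only the value of the constant $C$ and has no effect on the rest of the argument.
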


\section{Existence}
\label{sec:Existence}

  Let   $s\mapsto M(s)$ be given as in the hypothesis of the theorem.  
  If for every $s\in\reps$, the set $M(s)$
  is infinite, then any absolutely normal number satisfies the conclusion of the theorem.  
  Thus, we assume
  that there is at least one $s\in\reps$ for which $M(s)$ is finite.  We construct a sequence
  of intervals by recursion so that the unique real number $x$ in their intersection has the
  properties stated in the conclusion of the theorem.  We define the following 
  sequences indexed by $j$:

\begin{itemize}
\item Fix sequences $((s_j,n_j))_{j \ge 0}$ and $(r_j)_{j \ge 0}$ as follows.  In the sequence
  $((s_j,n_j))_{j \ge 0}$, the integer $s_j$ is an element of $\reps$ such that $M(s_j)$ is finite,
  $n_j$ is a positive integer that does not belong to $M(s_j)$ and every such pair appears
  infinitely often.  The sequence $(r_j)_{j \ge 0}$ is the enumeration of all of the numbers $s^m$, for
  $s\in\reps$ and $m\in M(s)$, in increasing order, including those for which $M(s)$ is infinite.

\item For $j \ge 0$, set $s^*_j = s_j^{\ell_U}$, 
where $\ell_U$ is as in Lemma~\ref{5} for $s_j$,  $M(s_j)$ and $n_j$.

\item For $j \ge 0$ and the pair $(s_j,n_j)$, 
let $d_j$ and $\epsilon_j$ be as guaranteed by the conclusion of
Lemma~\ref{8} for $s_j$, $M(s_j) $ and $n_j$.

\item For $j \ge 0$, let $p_j$ be the least positive integer such that
for each $k$ less than or equal to~$j$, we have $r_k^{p_j}\geq 2(j+1)$.
\end{itemize}

The recursion uses two additional functions denoted by $\ell(j)$ and~$x(j,a,y)$. 
 Let $\ell(j)$ be the least positive integer such that the
  following hold: 

\begin{itemize}
\item For all positive integers $a$ and all $k$ less than or equal to~$j$,
$\base{a+\ell(j)}{r_k}-\base{a}{r_k}$ is greater than $2\ceil{\log s^*_{j-1}+3\log s^*_{j}}(j+2)$.

\item For all positive integers $a$ and all $k$ less than or equal to~$j$,
$\base{a+\ell(j)}{r_k}-\base{a}{r_k}$ is greater than $3p_k(j+2)$.

\item For all positive integers $a$, the conclusion of Lemma~\ref{7} 
with input values $s_j$, $M(s_{j})$ and~$n_{j}$
applies to $\base{a+\ell(j)}{s_j}-\base{a}{s_j}$ for 
$\epsilon=1/(j+1)$ and $\delta=1/4$.

 \item For all positive integers $a$, the conclusion of 
Lemma~\ref{8} with input values $s_{j}$, $M(s_{j})$ and~$n_{j}$ 
 applies to    $\base{a+\ell(j)}{s_j}-\base{a}{s_j}-n_j$, for $\delta=1/4$.

\item The conclusion of Lemma~\ref{13} 
with input values $s^*_j$, 
$U(s_{j}, M(s_{j}), n_{j})$ as in Lemma~\ref{5},
 and  the set   of bases  
$(\{r_k:k\leq j\}\cup \{r^{p_j}_k:k\leq j\})\setminus\{s_j^m:m\in M(s_j)\}$,
applies to $\ell(j)$  for $\epsilon=1/(j+1)$ and~$\delta=1/4$.
\end{itemize}

\noindent For $y$ an $s^*_j$-adic rational number of precision 
$(s^*_j)^{\base{a}{s^*_j}}$, let $x(j,a,y)$ be the least number such that there is a block $w^*$ of length
$\base{a+\ell(j)}{s^*_j}-\base{a}{s^*_j}$ with elements in $U(s_{j},M(s_{j}),n_{j})$ (as as in Lemma~\ref{5})  for which the following hold.  
Let $w$ be the sequence in base~$s_j$ such that $(w;\ell_U)=w^*$:

\begin{itemize}
\item  $\displaystyle{x(j,a,y)=y+(s^*_j)^{-(\base{a}{s^*_j}+1)}\sum_{k=0}^{|w^*|-1}  
  w^*_k(s^*_{j})^{-k}.}$

 \item  For all $m$ in $M(s_j)$, $D\bigl((w;m),\digits{s_j}{m}\bigr)< \frac{1}{j+1}$.

\item $\displaystyle{\frac{\occ((w;n_j),d_j)}{|(w;n_j)|}< \frac{1}{s^{n_j}} -\epsilon_j}$.

\item For all $r$ in $(\{r_k:k\leq j\}\cup \{r^{p_j}_k:k\leq j\})\setminus\{s_j^m:m\in M(s_j)\}$
and for $u$ the block of digits from position $\base{a}{r}+1$ to position $\base{a+\ell(j)}{r}$
 in the base-$r$ expansion of $x(j,a,y)$, we have $D(u,\B_r)<1/(j+1)$.
\end{itemize}

 We proceed by recursion on $t$ to define sequences $(j_t)_{t\ge 0}$, 
$(b_t)_{t \ge 0}$ and~$(x_t)_{t \ge 0}$.  For $t \ge 0$, $j_t$~and $b_t$ are positive integers 
and $x_t$ is a  $s^*_{j_t}$-adic rational number of precision $\base{b_t}{s^*_{j_t}}$.  
The real~$x$, which is the  eventual result of our construction, will be an element of
  $[x_t,x_t+(s^*_{j_t})^{-\base{b_t}{s^*_{j_t}}})$.  
\medskip
\medskip

\noindent{\emph{Initial stage. }} Let $j_0=0,$  $x_0=0$ and $b_0=0$.

\medskip \noindent{\emph{Stage $t+1$.}}  Given $j_t$, $b_t$, $x_t$.  
Consider the two conditions.
\begin{enumerate}
\item For all bases $r\in\{r_k:k\leq j_t+1\}\setminus\{s_{j_t}^m:m\in M(s_{j_t})\}$,
  \[
  \base{b_t+\ceil{\log s^*_{j_t} + 3\log
      s^*_{j_t+1}}+\ell(j_t+1)}{r}-\base{b_t}{r}< \frac{\base{b_t}{r}}{j_t+1}.
  \]
\item For the block $w$ composed of the
  first $\base{b_t}{s_{j_t}}$ digits in the base-$s_{j_t}$ expansion of $x_t$,
\[
\frac{\occ((w;n_{j_t}),d_{j_t})}{|(w;n_{j_t})|}< \frac{1}{s^{n_{j_t}}} - \frac{\epsilon_{j_t}}{2}.
  \]
\end{enumerate}
If both conditions hold, let $j_{t+1}=j_t+1$, let $a=b_t+\ceil{\log s^*_{j_t} + 3\log s^*_{j_t+1}}$
and let $y$ be the left endpoint of the leftmost $s^*_{j_{t+1}}$-adic subinterval of
$[x_t,x_{t}+(s^*_{j_t})^{-\base{b_{t}}{s^*_{j_t}}})$ of length
$(s^*_{j_{t+1}})^{-\base{a}{s^*_{j_{t+1}}}}$.  Otherwise, $j_{t+1}=j_t$, $a=b_t$, and $y=x_t$.
Finally define,
\[
x_{t+1}=x(j_{t+1},a,y) \text{ and } b_{t+1}=a+\ell(j_{t+1}).
\]
\medskip

We check that the construction succeeds.   By Lemmas~\ref{7}, \ref{8} and~\ref{13}, 
the integer $\ell(j)$ is well defined. 
Indeed, in the definition of $x(j,a,y)$, each of these lemmas is applied so that
at least $3/4$ of the blocks being considered are suitable. 
Thus, at least $1/4$
of the blocks being considered are suitable in terms of all three of the lemmas.  It follows that
$x(j,a,y)$ is well defined and that the sequence $x_t$ converges to a limit $x$.

We show that $(j_t)_{t \ge 0}$ tends to infinity with $t$. 
Clearly, the function $t \mapsto j_t$ is non-decreasing.
Suppose that $\lim_{t\to\infty}j_t =h<\infty$ and let $t_0$ be such that $j_{t_0}=h$.  
By the first condition in the definition of the function $\ell$, 
we have $b_{t+1}>b_t$ for $t \ge 0$.  Thus, since
the value of $\ell(j_t)$ does not depend on that of $b_t$, 
there is a stage $t_1>t_0$ such that
for all $t>t_1$ and all  $r\in\{r_k:k\leq  j_t\}$,
 the quantity
\[
\base{b_t+\ceil{\log s^*_{h} + 3\log    s^*_{h}}+\ell(h)}{r}-\base{b_t}{r}
\] 
is less than $\frac{\base{b_t}{r}}{j_t+1}$.
Similarly, for all stages $t>t_0$, we have $x_{t+1}=x(h,b_t,x_t)$.  
Then, by Lemma~\ref{20} for $w$ equal to the expansion of $x$ in base
$s_h^{n_h}$ and the sequence of integers $(c_t) _{t \ge t_0}=(\base{b_t}{s^*_h}(\ell_U/n_h)) _{t \ge t_0}$,
\[
\liminf_{t\to\infty}\frac{\occ((w_0, \ldots , w_{c_t}),d_h)}{c_t+1}<\frac{1}{s_h^{n_h}} -\frac{\epsilon_h}{2}.
\]
Then, there must be a $t>t_1$ such that
\[
\frac{\occ((w_0, \ldots , w_{c_t}),d_h)}{c_t+1}<\frac{1}{s_h^{n_h}} - \frac{\epsilon_h}{2}.  
\]
For such a $t$, the criteria for defining $j_{t+1}=j_t+1$ are satisfied,
contradicting the supposition that $\lim_{t\to\infty}j_t =h$.  Thus, $\lim_{t\to\infty}j_t=\infty.$

Suppose that $s\in\reps$ and $n\not\in M(s)$.  Then, there are infinitely many $j$ such
that $(s_j,n_j)=(s,n)$.
Fix $d$ and $\epsilon$ as guaranteed by the
conclusion of Lemma~\ref{8} for $s$, $M(s)$ and $n$.
 By the previous paragraph, there are infinitely many stages $t$ such that
$s=s_{j_t}$, $n=n_{j_t}$, $d=d_{j_t}$, $\epsilon=\epsilon_{j_t}$ and
\[
\frac{\occ((w_0, \ldots ,w_{\base{b_t}{s^*_{j_t}}(\ell_U/n)}),d)}{{\base{b_t}{s^*_{j_t}}(\ell_U/n)}+1}< 
\frac{1}{s^{n}} - \frac{\epsilon}{2},
\] 
where
$(w_0, \ldots ,w_{\base{b_t}{s^*_{j_t}}(\ell_U/n)})$ is the sequence 
of digits in the base-$s^n$ expansion of $x_t$.
By construction, these are also the sequence of digits in the base-$s^n$ expansion of $x$.
Consequently, $x$ is not simply normal to base $s^n$.

Now, suppose that $s\in\reps$ and $m\in M(s)$.  By the definition of the sequence 
$(r_j)_{j \ge 0}$, fix the
integer~$h$ such that $s^m=r_{h}$.  
Since $\lim_{t\to\infty}j_t=\infty$ we can fix $t_0$ such that $j_t\geq h$ for all $t>t_0$.  
We consider the construction during stages $t+1>t_0$.

There are two possibilities for $s^m$ during stage $t+1$, depending on whether $s_{j_{t+1}}=s$
or not.
Suppose first that $s_{j_{t+1}}\neq s$.  Then, $x_{t+1}$ was chosen so that for the block $u$ of
digits from position $\base{a}{s^m}+1$ to position $\base{a+\ell(j_{t+1})}{s^m}=\base{b_{t+1}}{s^m}$ in
the base-$s^m$ expansion of $x_{t+1}$, we have 
$D(u,\B_{s^m})<1/(j_{t+1}+1)$, where $a$ is $b_t$ or
$b_t+\ceil{\log s^*_{j_t} + 3\log s^*_{j_t+1}}$.  
In the latter case, by the first condition in the
definition of $\ell(j_{t+1})$, 
we deduce that
$\base{a+\ell(j_{t+1})}{s^m}-\base{a}{s^m}$ is greater than $2\ceil{\log
s^*_{j_t}+3\log s^*_{j_{t+1}}} (j_{t+1}+2)$.  It then follows from Remark~\ref{17}
that 
\[
\base{a}{s^m}-\base{b_t}{s^m}<2\ceil{\log
s^*_{j_t}+3\log s^*_{j_{t+1}}}.  
\]
By Lemma~\ref{18}, for the block $v$ of digits in the base-$s^m$
expansion of $x_{t+1}$ between positions $\base{b_t}{s^m}+1$ and $\base{b_{t+1}}{s^m}$,
we have $D(v,\B_{s^m})<2/(j_{t+1}+1)$.  
By construction, we treat the base 
$(s^m)^{p_{j_{t+1}}}$ 
similarly during stage $t+1$
and the same conclusion applies.  

Alternatively, suppose that $s=s_{j_t}$.  Again, for the block $v$ of digits in the base-$s^m$
expansion of $x_{t+1}$ between positions $\base{b_t}{s^m}+1$ and $\base{b_{t+1}}{s^m}$, we have
$D(v,\B_{s^m})<2/(j_{t+1}+1)$ by virtue of the second condition in the definition of $x(j,a,y)$ and
the above observations.  Similarly, this conclusion holds for base ${(s^m)}^{p_{j_t}}.$   (Note, to
keep the discussion simple, we have chosen to ignore the possibility of a difference between
$\base{b_{t+1}}{s^m}$ and $\base{b_{t+1}}{s^\ell_U}(\ell_U/m)$, where $U$ is as is defined during
stage $t+1$.)

Now, we consider the expansion of $x$ in base $s^m$.  For each $t>t_0$, by the definition of the
function $\ell$, Lemma~\ref{14} applies to the digits in this expansion from position
$\base{b_t}{s^m}+1$ to $\base{b_{t+1}}{s^m}$.  Thus, for each of these blocks in the expansion of
$x$ in base $s^m$, the simple discrepancy is less than $10/(j_{t+1}+1)$.  Since $j_t$ tends to
infinity as $t$ increases, 
by Lemma~\ref{19}, $x$ is simply normal to base $s^m$.

\section{Hausdorff dimension}
\label{sec:HD}

Like in \cite{Pol81}, the key tool for estimating the 
Hausdorff dimension of the set defined in Theorem~\ref{1}
is the following result of Eggleston~\cite[Theorem 5]{Egg52}.
 
\begin{lemma}\label{21}
Suppose that, for $k \ge 1$, the set $K_k$ is a linear set
consisting of $N_k$ closed intervals each of length $\delta_k$
and such that each interval of $K_k$ contains $N_{k+1} / N_k$ 
disjoint intervals of~$K_{k+1}$.
If $v_0 \in (0, 1]$ is such that for every $v < v_0$ the sum
\[
\sum_{k \ge 2} \, \frac{\delta_{k-1}}{\delta_k} \, (N_k (\delta_k)^v)^{-1}
\]
converges, then the Hausdorff dimension
of the set $\bigcap_{k \ge 1} K_k$ is greater than or equal to $v_0$. 
\end{lemma}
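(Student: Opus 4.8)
The plan is to prove Lemma~\ref{21} via the \emph{mass distribution principle} (Frostman's lemma): construct the natural self‑similar measure on $K:=\bigcap_{k\ge 1}K_k$ and show that it does not concentrate too fast, with the exponent governed by the convergent series in the hypothesis.

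\emph{Construction of the measure.} Since each of the $N_k$ components of $K_k$ contains exactly $N_{k+1}/N_k$ components of $K_{k+1}$, there is a Borel probability measure $\mu$ supported on $K$ that gives mass $1/N_k$ to each component of $K_k$ (obtained, e.g., as the weak limit of the normalised Lebesgue measures on the $K_k$, or by Carath\'eodory extension from the algebra generated by the components). With $\mu$ in hand it suffices to prove: for every $v<v_0$ there is $C=C(v)$ with $\mu(B)\le C\,\abs{B}^{\,v}$ for every interval $B$. The mass distribution principle then yields $\dim_H K\ge v$, and letting $v\uparrow v_0$ gives $\dim_H K\ge v_0$. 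Before that, note that $(\delta_k)$ is non‑increasing (a component of $K_{k+1}$ lies inside a component of $K_k$); if it does not tend to $0$, say $\delta_k\downarrow\delta_\infty>0$, then fixing $x\in K$ and letting $J_k$ be the component of $K_k$ containing $x$ gives a nested sequence with $\bigcap_k J_k\subseteq K$ an interval of length $\delta_\infty>0$, so $\dim_H K=1\ge v_0$ and we are done. Hence we may assume $\delta_k\downarrow 0$.

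\emph{The measure estimate.} Fix $v<v_0$; we may assume $v<1$ (if $v_0=1$ this loses nothing, as $\dim_H K\ge v$ for all $v<1$ already forces $\dim_H K\ge 1$). Let $B$ be an interval, $\varepsilon:=\abs{B}$. If $\varepsilon\ge\delta_1$ then $\mu(B)\le 1\le\delta_1^{-v}\varepsilon^{v}$. Otherwise pick the unique $k\ge 2$ with $\delta_k\le\varepsilon<\delta_{k-1}$. The components of $K_{k-1}$ are pairwise disjoint closed intervals of length $\delta_{k-1}>\varepsilon$, so $B$ meets at most two of them; inside each such component the components of $K_k$ are pairwise disjoint of length $\delta_k\le\varepsilon$, so $B$ meets at most $\varepsilon/\delta_k+2\le 3\varepsilon/\delta_k$ of them, each of $\mu$‑mass $1/N_k$. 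Therefore
\[
\mu(B)\ \le\ \frac{6\varepsilon}{\delta_k N_k}\ =\ \varepsilon^{\,v}\cdot\frac{6\,\varepsilon^{\,1-v}}{\delta_k N_k}\ \le\ \varepsilon^{\,v}\cdot\frac{6\,\delta_{k-1}^{\,1-v}}{\delta_k N_k}\ \le\ 6\,\varepsilon^{\,v}\cdot\frac{\delta_{k-1}}{\delta_k\, N_k\, \delta_k^{\,v}},
\]
where the last inequality uses $\delta_{k-1}^{\,1-v}=\delta_{k-1}\,\delta_{k-1}^{-v}\le\delta_{k-1}\,\delta_k^{-v}$, valid because $0<\delta_k\le\delta_{k-1}$ and $v>0$. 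The trailing factor is exactly the $k$‑th term of the series $\sum_{k\ge 2}\frac{\delta_{k-1}}{\delta_k}(N_k\delta_k^{\,v})^{-1}$, which converges by hypothesis, so these terms are bounded by some $C_1(v)$. Taking $C=\max\{\delta_1^{-v},\,6C_1(v)\}$ gives $\mu(B)\le C\,\abs{B}^{\,v}$ for all $B$, completing the argument.

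\emph{Where the difficulty lies.} The covering counts, the construction of $\mu$, and the invocation of the mass distribution principle are all routine. The one point that requires care is the final chain of inequalities: after bounding $\mu(B)$ by a constant multiple of $\varepsilon/(\delta_k N_k)$ and peeling off a factor $\varepsilon^{v}$, one must observe that the remaining quantity is dominated — precisely because $(\delta_k)$ is non‑increasing, which lets $\delta_{k-1}^{-v}$ be replaced by $\delta_k^{-v}$ — by the general term of the hypothesised convergent series, hence is uniformly bounded. Matching the combinatorial estimate to the exact shape of that series is the crux.
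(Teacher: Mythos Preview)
Your argument is correct. Note, however, that the paper does not supply its own proof of this lemma: it is quoted verbatim as Theorem~5 of Eggleston~\cite{Egg52} and used as a black box in Section~\ref{sec:HD}. What you have written is the standard modern route via the mass distribution principle, and every step checks out: the balanced branching ($N_{k+1}/N_k$ children per component) yields the natural probability measure assigning mass $1/N_k$ to each level-$k$ interval; the counting bound (at most two components of $K_{k-1}$, at most $3\varepsilon/\delta_k$ components of $K_k$ in each) is routine; and your final chain of inequalities correctly identifies the $k$-th term of the hypothesised series as the controlling factor, so its convergence gives the uniform Frostman bound. One cosmetic remark: the clause ``we may assume $v<1$'' is superfluous, since $v<v_0\le 1$ already forces it.
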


We analyze the construction of Section~\ref{sec:Existence}.
We keep the notation from that section.

We introduce a positive real number $\eta$ such that
\[
\frac{\log(s_j^* - 2)}{\log s_j^*} \ge \eta
\]
for $j \ge 0$. In view of Lemma~\ref{5} the bases $s_j^*$ may be taken
arbitrarily large, thus $\eta$ can be taken arbitrarily close to $1$.
For convenience, we assume that the sequence $(s_j^*)_{j \ge 0}$ 
is non-decreasing and that $s_0^* \ge 4$. 

Let $t \ge 2$ be an integer.
At stage $t$, by Lemmas~\ref{7}, \ref{8} and~\ref{13},
the number of suitable blocks $w^*$ is at least equal ~to
\[
\nu_t = \frac{1}{4} \, (s_{j_t}^* - 2)^{\base{a+\ell(j_t)}{s^*_{j_t}}-\base{a}{s^*_{j_t}}},
\]
since $\card U \ge s_{j_t}^* - 2$. 
Furthermore, the length of each interval is
\[
\delta_t = (s^*_{j_t})^{-\base{b_t}{s^*_{j_t}}}.
\]
Observe that
\[
\nu_t \ge \frac{1}{4} (s_{j_t}^*)^{-1} \, {\rm e}^{\eta \ell(j_t)}
\ge (s_{j_t}^*)^{-2} \, {\rm e}^{\eta \ell(j_t)}
\]
and
\[
(s_{j_t}^*)^{-1} \, {\rm e}^{-b_t} \le \delta_t \le {\rm e}^{-b_t}.
\]
If $j_{t+1} = j_t + 1$, we have in particular that
\[
\base{b_{t+1}}{r_0}-\base{b_t}{r_0}< \frac{\base{b_t}{r_0}}{j_{t+1}}.
\]
This gives
\[
{\rm e}^{b_{t+1} - b_t} < r_0^2 \,  {\rm e}^{b_t / j_{t+1}}
\]
and
\begin{equation}
\frac{\delta_t}{\delta_{t+1}} \le  s_{j_{t+1}}^* r_0^2 \,  {\rm e}^{b_t / j_{t+1}}.   \label{4.1} 
\end{equation}
If $j_{t+1} = j_t$, then we know that
\[
\base{b_{u+1}  + \ell (j_t) }{r_0}-\base{b_{u+1}}{r_0}< \frac{\base{b_{u+1}}{r_0}}{j_{t}},
\]
where $u$ is the largest integer such that $j_u = j_t - 1$. 
As seen above, this implies that
\[
{\rm e}^{\ell} < r_0^2 \,  {\rm e}^{b_{u+1} / j_{t+1}},
\]
so, for $b \ge b_{u+1}$,
\[
{\rm e}^{b + \ell - b} < r_0^2 \,  {\rm e}^{b / j_{t+1}},
\]
hence we get the same upper bound on $\delta_t / \delta_{t+1}$
as in (4.1).  

Furthermore, it is clear from the construction that
\[
\ell (j_0) + \ell (j_1) + \ldots + \ell (j_t) \le b_t \le
\log \Bigl( \prod_{h=0}^{j_t} \, (s_h^*)^5 \Bigr) + \ell (j_0) + \ell (j_1) + \ldots + \ell (j_t). 
\]
Also, putting $N_t = \nu_1 \ldots \nu_t$, we have
\[
N_t \ge \Bigl( \prod_{h=0}^{j_t} \, (s_h^*)^{-7} \Bigr) {\rm e}^{\eta b_t}.
\]
Since  the construction also ensures
\[
\base{1+\ell(j_t)}{r_0}-\base{1}{r_0} \ge 2\ceil{\log s^*_{j_t-1}+3\log s^*_{j_t}} (j_t+2),
\]
we get 
\[
s_{j_t}^* \le {\rm e}^{\ell(j_t) / 2j_t} \le {\rm e}^{b_t / j_t}. 
\]

Consequently, for any positive real number $v$
and any integer $t \ge 2$, we obtain
\[
\frac{\delta_{t-1}}{\delta_t} \, (N_t (\delta_t)^v)^{-1}
\le (s_{j_{t+1}}^*)^8 \, {\rm e}^{b_t / j_{t+1}} \, {\rm e}^{(v - \eta) b_t}
\le  {\rm e}^{9 b_t / j_{t+1}} \, {\rm e}^{(v - \eta) b_t}. 
\]
Since $j_t$ tends to infinity as $t$
increases and $(b_t)_{t \ge 0}$ is a strictly increasing sequence
of integers, the corresponding series converges for every $v < \eta$. 
It then follows from Lemma \ref{21} 
that the dimension of the set into consideration is not less than $\eta$.
Recalling that $\eta$ can be taken arbitrarily close to $1$, this proves the
last assertion of Theorem~\ref{1}. 

\bibliography{simply}
\end{document}